\documentclass[11pt]{article}

\usepackage[margin=1in]{geometry}
\usepackage{amsmath,amssymb,amsthm}
\usepackage{array}
\usepackage{verbatim}
\usepackage[hidelinks]{hyperref}

\theoremstyle{plain}
\newtheorem{theorem}{Theorem}[section]
\newtheorem{lemma}[theorem]{Lemma}
\newtheorem{proposition}[theorem]{Proposition}
\newtheorem{corollary}[theorem]{Corollary}
\newtheorem{conjecture}[theorem]{Conjecture}
\theoremstyle{definition}
\newtheorem{definition}[theorem]{Definition}

\theoremstyle{remark}

\newcommand{\F}{\mathbb F}
\newcommand{\dd}{\delta}
\newcommand{\xor}{\mathbin{\oplus}}
\newcommand{\bigxor}{\mathop{\bigoplus}}
\newcommand{\bits}{\{0,1\}^3}

\title{Three-Bit Flows and Cycle Covers \\ Part I}
\author{
{Shiva Kintali}\footnote{Email : {shiva.kintali@gmail.com}} \\
}
\date{}

\begin{document}
\maketitle

\begin{abstract}
We study the relationship between nowhere-zero three-bit flows, labeled
triangles, and cycle double covers of bridgeless multigraphs.
Using the flow theorems of Seymour and Tutte, we realize
the three flow values at each vertex as the side differences of a triangle
whose sides carry two-element subsets of $\F_2^3$. We express agreement of
these local labels across graph edges as a binary system and prove its
solvability by an inconsistency certificate, a local tester-parity identity,
and a global double count. The resulting compatible labels trace cycle
components in which every edge occurs exactly twice, proving that every finite
bridgeless multigraph has a cycle double cover, {\em proving the cycle
double cover conjecture}.
\end{abstract}

\section{Introduction}
\label{sec:introduction}

The cycle double cover conjecture is a central and long-standing problem in
graph theory: is the absence of bridges the only obstruction to covering every
edge exactly twice by cycles?  The conjecture was formulated independently by
Szekeres and Seymour \cite{Szekeres1973,Seymour1979}; related circuit-cover
questions were studied by Itai and Rodeh \cite{ItaiRodeh1978}. It remained
open in full generality despite extensive work on special classes, flows, and
surface embeddings \cite{Jaeger1985,Zhang2012,GhanbariSamal2026}.
In this paper we prove this conjecture.

All graphs in this paper are finite undirected multigraphs. Parallel edges
and loops are allowed unless explicitly excluded. A loop contributes two to
the degree of its incident vertex. For an edge $e$, the notation $G-e$ means
that $e$ is deleted while its endpoints are retained. An edge $e$ is a
\emph{bridge} if $G-e$ has more connected components than $G$; a graph is
\emph{bridgeless} if it has no bridge. A loop is never a bridge. Isolated
vertices have no effect on cycle double covers.
For a vertex $v$, we write $\dd(v)$ for the set of edges incident with $v$;
because loops are excluded whenever this notation is used below, each such
edge occurs once in $\dd(v)$.

\begin{definition}
A \emph{cycle} is a connected 2-regular subgraph. Thus a loop is a cycle,
and two parallel edges joining the same two vertices form a cycle of length
two. In particular, a cycle uses each of its edges once, although a vertex
of the ambient multigraph may be incident with additional edges not belonging
to the cycle.
\end{definition}

\begin{definition}
A \emph{cycle double cover} of a graph $G$ is a finite multiset $\mathcal C$
of cycles such that every edge of $G$ belongs to exactly two members of
$\mathcal C$, counted with multiplicity. Equal cycles may occur more than
once in $\mathcal C$; their occurrences are regarded as distinct members.
\end{definition}

The empty multiset is a cycle double cover of an edgeless graph. Adding or
deleting isolated vertices changes neither the cycles nor the edge-covering
condition, which justifies ignoring isolated vertices throughout.

A loop is never a bridge. If a non-loop edge lies on a cycle, then after the
edge is deleted the remaining part of the cycle still joins its two endpoints;
such an edge is not a bridge either. In a cycle double cover every edge lies
on a cycle, so no edge can be a bridge. Thus bridgelessness is necessary, and
the cycle double cover conjecture asserts that it is also sufficient.

\begin{conjecture}[Cycle Double Cover Conjecture]
\label{conj:cdc}
Every finite bridgeless multigraph has a cycle double cover.
\end{conjecture}

\subsection{Related work}
Szekeres formulated the problem through polyhedral decompositions of cubic
graphs \cite{Szekeres1973}, while Seymour's circuit formulation
\cite{Seymour1979} and the work of Itai and Rodeh \cite{ItaiRodeh1978} placed it
in the now-standard language of circuit and cycle covers. Jaeger's survey
\cite{Jaeger1985} describes the early theory and the reduction to cubic graphs;
Zhang's monograph \cite{Zhang2012} gives a comprehensive account of circuit
double covers, reductions, and related conjectures.

Several important classes were previously known to satisfy the conjecture.
For a bridgeless plane graph, the facial boundary walks, after decomposition
into cycles, give a cycle double cover \cite{Jaeger1985}. A properly
3-edge-colored cubic graph
also has an immediate cycle double cover: the union of each pair of color
classes is a spanning 2-regular subgraph, and each color class occurs in two
of the three pairs. Alspach, Goddyn, and Zhang established the conjecture for
the broad class of bridgeless graphs containing no subdivision of the Petersen
graph \cite{AlspachGoddynZhang1994}. More recently, the surface-embedding
formulation has motivated quantitative work on embeddings with few singular
edges \cite{GhanbariSamal2026}.

Nowhere-zero flows form another major line of attack. Tutte's group-flow
theorem relates integer flows to flows over finite abelian groups of the same
order \cite{Tutte1954}, and Seymour proved that every bridgeless graph has a
nowhere-zero integer 6-flow \cite{Seymour1981}. The latter is also an integer
8-flow; applying Tutte's theorem with the group $\F_2^3$ therefore gives a
nowhere-zero three-bit flow. These theorems supply the established external
input to our argument. The new step is a global compatibility theorem that
converts the three-bit flow into two-element edge labels satisfying the local
cycle condition. This conversion is the mechanism that completes the cycle
double cover.

Connections between low-order flows and cycle double covers have been studied
in several forms. Zhang related nowhere-zero $4$-flows to cycle double covers
with bounded numbers of cycles and proved further extension results
\cite{Zhang1996}. Xie and Zhang introduced flow-pair covers and used suitable
pairs of integer flows to obtain cycle double covers under additional parity
and support hypotheses \cite{XieZhang2009}. Those results emphasize that a
flow by itself does not automatically display the required cycles. The
flow-to-cover theorem proved here uses a different mechanism: for a cubic
multigraph, an arbitrary nowhere-zero $\F_2^3$-flow is converted into globally
compatible two-element edge labels, from which the cycle components are read
off directly.

As a basic example, suppose that a loopless cubic multigraph has a proper
3-edge-coloring. Every vertex is incident with one edge of each color. The
union of any two color classes is therefore a spanning 2-regular subgraph,
hence a disjoint union of cycles. There are three choices of two color
classes. An edge of one fixed color belongs to exactly the two choices
containing that color, so the cycle components of these three 2-factors form a
cycle double cover. Thus the non-3-edge-colorable cubic multigraphs
constituted the essential unresolved case after the cubic reduction.

\subsection{Proof strategy and roadmap}
Our proof converts a nowhere-zero three-bit flow into a cycle double cover by
passing through a system of locally defined, globally compatible two-element
labels. The argument has six stages.
\begin{enumerate}
\item \emph{Structural reduction.}
Section~\ref{sec:elementary-lemmas} establishes the elementary cycle and
trail facts needed for the reduction. Proposition~\ref{prop:cubic-reduction}
in Section~\ref{sec:cubic-reduction} replaces an arbitrary bridgeless
multigraph by a loopless bridgeless cubic multigraph in such a way that a
cycle double cover of the cubic graph projects to a cycle double cover
of the original graph. It therefore suffices to prove the conjecture in
the cubic case.

\item \emph{Three-bit flow input.}
Section~\ref{sec:flows} combines Seymour's 6-flow theorem
(Theorem~\ref{thm:seymour}) with Tutte's group-flow theorem
(Theorem~\ref{thm:tutte}). Their consequence,
Corollary~\ref{cor:three-bit-flow}, supplies a nowhere-zero
$\F_2^3$-flow $f$. At a cubic vertex its three incident values are
distinct nonzero vectors whose XOR is zero, as recorded in
Lemma~\ref{lem:fano-line}.

\item \emph{Local triangle labels.}
In Section~\ref{sec:vertex-triangle}, the three flow values at each
vertex are realized as the XOR differences of the sides of a triangle.
A translation $t_v\in\F_2^3$ moves all three triangle labels
simultaneously. Lemma~\ref{lem:triangle-parity} proves the crucial
local invariant: every label occurs on either zero or two sides of the
triangle. Lemma~\ref{lem:uniform-side-formula} expresses every side in
a form suitable for comparison across a graph edge.

\item \emph{The global gluing system.}
Section~\ref{sec:edge-gluing} asks for translations $t_v$ that make the
two endpoint triangles assign the same unordered label pair to each
edge. Proposition~\ref{prop:edge-gluing-equation} shows that this is
equivalent to the binary system
\[
t_u\xor t_v\xor\epsilon_e f(e)=d_e
\qquad(e=uv).
\]
Thus the geometric compatibility question becomes a finite system of
linear equations over $\F_2$.

\item \emph{Elimination of compatibility obstructions.}
Section~\ref{sec:xor-certificates} uses
Lemma~\ref{lem:xor-certificate} to represent any inconsistency by an XOR
certificate $0=1$. Lemma~\ref{lem:certificate-shape} encodes such a
certificate by one tester vector on each edge. The local calculation in
Section~\ref{sec:local-testers}, especially
Lemma~\ref{lem:local-tester-parity}, identifies the discrepancy at a
vertex with the parity of its incident nonzero testers. In
Section~\ref{sec:triangle-compatibility}, every nonzero edge tester is
counted once at each endpoint. The resulting incidence count is even,
so the global discrepancy is zero in $\F_2$. This contradicts the
required value one in an inconsistency certificate and proves the
triangle compatibility theorem,
Theorem~\ref{thm:triangle-compatibility}.

\item \emph{Extraction of the cycle double cover.}
Section~\ref{sec:cycles} assigns to each label $s\in\F_2^3$ the subgraph
consisting of the edges whose common two-element label set contains $s$.
Lemma~\ref{lem:labels-trace-cycles} uses triangle parity to show that
every vertex has degree zero or two in this subgraph, so every
edge-containing component is a cycle. The common two-element label set
on each edge contains exactly two distinct labels and hence places that
edge in exactly two indexed cycle components.
Corollary~\ref{thm:cubic-conclusion} proves the cubic case, and
Proposition~\ref{prop:cubic-reduction} then proves
Conjecture~\ref{conj:cdc} in full.
\end{enumerate}

Theorems~\ref{thm:seymour} and~\ref{thm:tutte} are the two external inputs.
The principal new result is Theorem~\ref{thm:triangle-compatibility}; all
reductions and all subsequent steps converting the resulting three-bit flow
into a cycle double cover are proved here. The resulting flow-to-cover
statement is isolated as Theorem~\ref{thm:flow-to-cdc}.

\section{Preliminaries}
\label{sec:elementary-lemmas}

A \emph{trail} is a sequence $v_0e_1v_1e_2\cdots e_mv_m$
in which each edge $e_i$ joins $v_{i-1}$ to $v_i$ and the edge objects
$e_1,\ldots,e_m$ are pairwise distinct. It is \emph{closed} if $v_m=v_0$.
Parallel edges are distinct edge objects. To say that a trail decomposes into
edge-disjoint cycles means that the cycles have pairwise disjoint edge sets and
that the union of those edge sets is exactly the set of edges in the trail.

\begin{lemma}[Non-bridges lie on cycles]
\label{lem:edge-cycle}
Let $G$ be a loopless multigraph and let $e=uv$ be an edge. Then $e$ is not
a bridge if and only if $e$ lies on a cycle.
\end{lemma}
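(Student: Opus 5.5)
We prove the two implications separately, working directly from the definitions of bridge (deleting $e$ increases the number of connected components) and of cycle (connected 2-regular subgraph). The key observation is that $e=uv$ is a bridge if and only if $u$ and $v$ lie in different components of $G-e$. Deleting $e$ can only affect the component containing $u$ and $v$, and that component splits into at most two pieces. Indeed, every vertex that was joined to $u$ by a walk in $G$ is still joined to $u$ or to $v$ in $G-e$: truncate the walk at its first use of $e$. So the component count increases exactly when no $u$--$v$ walk avoids $e$.

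\emph{Cycle implies not a bridge.} Suppose $e$ lies on a cycle $C$. Since $G$ is loopless, $u\neq v$. The subgraph $C$ is connected and 2-regular, hence a closed trail through all of its edges. Removing $e$ from it leaves a trail from $v$ back to $u$ using only edges of $C-e$. This trail is a $u$--$v$ walk in $G-e$, so by the observation above $e$ is not a bridge. This is exactly the informal argument already given in the introduction.

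\emph{Not a bridge implies cycle.} Suppose $e$ is not a bridge. Then $u$ and $v$ are in the same component of $G-e$, so there is a $u$--$v$ walk in $G-e$. Take one of minimum length; it is a path $P$ with no repeated vertices, since any repetition could be shortcut. Its edges are distinct and all different from $e$, and $P$ has length at least one because $u\neq v$. Adjoining $e$ gives the subgraph $P+e$. In it, each internal vertex of $P$ has degree $2$ from its two path edges. The vertices $u$ and $v$ each have degree $2$, namely one path edge plus $e$. The subgraph is connected. Hence $P+e$ is a cycle containing $e$. If $P$ is a single edge parallel to $e$, this yields the length-two cycle that the definition explicitly allows.

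The only step needing care is the component-count observation in the first paragraph, which uses the fact that removing one edge splits a component into at most two pieces. Beyond that we only need to make sure multigraph features cause no trouble. Parallel edges are distinct edge objects, so $P$ never reuses $e$. Looplessness guarantees $u\neq v$, so $P$ is nonempty and the degree count at $u$ and $v$ is correct.
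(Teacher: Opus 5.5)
Your proof is correct and follows essentially the same route as the paper. In the converse you close a $u$--$v$ path in $G-e$ with $e$, and in the forward direction you use $C-e$ to keep $u$ and $v$ connected. The only difference is packaging: you prove the criterion ``$e$ is a bridge iff $u,v$ lie in different components of $G-e$'' up front by truncating walks at the first use of $e$, whereas the paper handles the forward direction by rerouting arbitrary $x$--$y$ paths through $D-e$.
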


\begin{proof}
Suppose first that $e$ lies on a cycle $D$. Since $G$ is loopless, $u$ and
$v$ are distinct. Deleting $e$ from $D$ leaves a $u$--$v$ path in $G-e$.
Let $x$ and $y$ be vertices in the same component of $G$, and choose an
$x$--$y$ path $Q$ in $G$. If $Q$ avoids $e$, it remains in $G-e$. If $Q$
uses $e$, replace that occurrence of $e$ by the $u$--$v$ path $D-e$. The
result is an $x$--$y$ walk in $G-e$, and every walk contains a path between its
endpoints. Thus deleting $e$ does not separate any two previously connected
vertices and cannot increase the number of components. Hence $e$ is not a
bridge.

Conversely, suppose $e$ is not a bridge. If $u$ and $v$ belonged to different
components of $G-e$, then deleting $e$ would split the component of $G$
containing $e$ into at least two components, so $e$ would be a bridge.
Therefore $u$ and $v$ lie in the same component of $G-e$. Choose a
$u$--$v$ path $P$ in $G-e$. The union of $P$ and $e$ is connected, and every
vertex in that union has degree two. It is therefore a cycle containing $e$.
This argument also covers parallel edges: if $P$ consists of one edge
parallel to $e$, then $P\cup\{e\}$ is a cycle of length two.
\end{proof}

\begin{lemma}[A closed trail splits into cycles]
\label{lem:trail-cycles}
Every nonempty closed trail in a loopless multigraph decomposes into
edge-disjoint cycles; equivalently, its edge set is the disjoint union of the
edge sets of those cycles.
\end{lemma}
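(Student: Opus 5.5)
We argue by strong induction on the number $m$ of edges in the closed trail $T=v_0e_1v_1\cdots e_mv_m$ with $v_m=v_0$. Because the multigraph is loopless, each edge joins two distinct vertices, so $m\ge 2$: a closed trail with $m=1$ would need $e_1$ to be a loop. The plan is to extract one cycle from $T$ by taking a minimal closed subtrail, remove it, and apply the induction hypothesis to what remains.

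First, among all pairs of indices $0\le i<j\le m$ with $v_i=v_j$, choose one with $j-i$ minimal. Such a pair exists because $v_0=v_m$. By minimality, the vertices $v_i,v_{i+1},\ldots,v_{j-1}$ are pairwise distinct. Moreover $j-i\ge 2$, since $j-i=1$ would make $e_{i+1}$ a loop.

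Let $C$ be the subgraph with vertices $v_i,\ldots,v_{j-1}$ and edges $e_{i+1},\ldots,e_j$. The edges are distinct because $T$ is a trail. Each vertex $v_k$ of $C$ is incident with exactly two of these edges, namely $e_k$ and $e_{k+1}$ (indices read cyclically within the segment, with $v_j=v_i$). When $j-i=2$ these are two distinct parallel edges, which gives a cycle of length two. So $C$ is connected and $2$-regular, hence a cycle in the sense of the definition.

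Next, delete this segment: the sequence $v_0e_1\cdots e_iv_i e_{j+1}v_{j+1}\cdots e_mv_m$ is well defined because $v_i=v_j$. It is a closed trail using exactly the edges of $T$ not in $C$, and it has $m-(j-i)<m$ edges. If it is empty, meaning $i=0$ and $j=m$, then $T$ is itself the cycle $C$ and we are done. Otherwise the induction hypothesis decomposes it into edge-disjoint cycles. Adding $C$, whose edge set is disjoint from the remainder, gives the required decomposition of $T$.

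The only delicate point is the minimality argument: we must confirm that the chosen segment really has distinct vertices, and that the length-two case yields two distinct parallel edges rather than one edge traversed twice. The trail condition on distinct edge objects is exactly what rules out the latter. The rest is bookkeeping.
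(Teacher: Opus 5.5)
Your proposal is correct and follows essentially the same route as the paper: strong induction on $m$, extraction of the cycle between a repeated-vertex pair $v_i=v_j$ with $j-i$ minimal (looplessness forcing $j-i\ge 2$), and the induction hypothesis applied to the spliced remainder. The only difference is cosmetic: the paper treats the case with no repeated vertex other than $v_0=v_m$ separately, whereas you absorb it into the case $i=0$, $j=m$ of the same minimal-pair argument.
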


\begin{proof}
We use strong induction on the number $m$ of edges in the trail. Write the
closed trail as
\[
W=v_0e_1v_1e_2\cdots e_mv_m,
\qquad v_m=v_0,
\]
where $m\geq1$ and the edges $e_1,\ldots,e_m$ are pairwise distinct, and
assume the assertion holds for every nonempty closed trail with fewer than
$m$ edges. If no vertex is repeated except $v_0=v_m$, then the subgraph
traced by $W$ is connected and 2-regular. Hence that subgraph is a cycle, and
its edge set is the edge set of $W$.

Otherwise choose indices $0\leq i<j\leq m$ with $v_i=v_j$ and with $j-i$
as small as possible among all repeated-vertex pairs. The segment
\[
C=v_ie_{i+1}v_{i+1}\cdots e_jv_j
\tag{2.1}
\]
is a closed trail. It has no repeated internal vertex: any such repetition,
or any internal occurrence of $v_i$, would give a repeated-vertex pair with
smaller index difference. Moreover, $j-i\geq2$, since $j-i=1$ would make
$e_j$ a loop. Every internal vertex of the segment is incident in the traced
subgraph with its entering and leaving edges, while $v_i=v_j$ is incident with
the distinct first and last edges. Thus the traced subgraph is connected and
2-regular, so it is a cycle. We denote this cycle subgraph also by $C$.

Delete the segment (2.1) from the displayed order for $W$ and concatenate the
remaining portions at the common vertex $v_i=v_j$. The resulting sequence is
\[
W'=v_0e_1\cdots e_iv_i(=v_j)e_{j+1}\cdots e_mv_m.
\tag{2.2}
\]
The initial portion is omitted when $i=0$, and the final portion is omitted
when $j=m$. If no edge remains, then $C$ uses every edge of $W$ and we are
done. Otherwise (2.2) starts and ends at $v_0=v_m$ and uses pairwise distinct
edges, so it is a closed trail. It has $m-(j-i)<m$ edges. By the induction
hypothesis, $W'$ decomposes into edge-disjoint cycles. Those cycles are also
edge-disjoint from $C$, because $W'$ uses exactly the edges of $W$ not used by
$C$. Together with $C$, they give the required decomposition of $W$.
\end{proof}

\section{Reduction to cubic graphs}
\label{sec:cubic-reduction}

This is a standard reduction. We present the details here for the sake of completeness.

\begin{proposition}[Cubic reduction]
\label{prop:cubic-reduction}
Suppose every loopless bridgeless cubic multigraph has a cycle double cover.
Then every finite bridgeless multigraph has a cycle double cover.
\end{proposition}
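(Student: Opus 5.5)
We reduce in stages, each stage producing a graph that is closer to loopless bridgeless cubic, and each equipped with a projection sending a cycle double cover of the new graph to one of the old graph. First, loops: a loop in $G$ is itself a cycle, so delete all loops, find a cycle double cover of the remainder, and add each loop twice. Deleting a loop creates no bridge, because a loop lies on no path. Second, components: a cycle double cover of each component gives one of the union, so we may assume $G$ is connected and loopless. Vertices of degree $2$ are handled by suppressing them, i.e. replacing a path $x e_1 v e_2 y$ by a single edge $xy$. If $x=y$, the two edges form a 2-cycle and can be split off and covered twice by that 2-cycle. Each cycle through the new edge lifts to a cycle through $e_1,e_2$. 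A bridgeless connected graph has no vertex of degree $1$; a component that is a single cycle is covered by that cycle taken twice.

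The main step is splitting vertices of degree $d\ge4$. At such a vertex $v$ with incident edges $e_1,\dots,e_d$, replace $v$ by a cycle $v_1v_2\cdots v_d$ of new edges and attach $e_i$ to $v_i$. Every $v_i$ then has degree $3$, and the graph stays loopless, with parallel pairs when $d$ is small. Doing this at every vertex of degree $\ge4$ yields a loopless cubic multigraph $H$. Contracting the new cycles recovers $G$.

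Bridgelessness of $H$: a new cycle edge lies on the new cycle, so by Lemma~\ref{lem:edge-cycle} it is not a bridge. An original edge $e$ lies on a cycle $D$ of $G$, again by Lemma~\ref{lem:edge-cycle}. At each blown-up vertex, $D$ enters and leaves along two edges $e_i,e_j$, and we reroute through an arc of the new cycle from $v_i$ to $v_j$. This gives a cycle in $H$ containing $e$.

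Projection: given a cycle double cover $\mathcal C$ of $H$, contract each new cycle. Each $C\in\mathcal C$ becomes a closed walk in $G$ using only original edges, each at most once. It is therefore a closed trail, or empty if $C$ used only new edges, in which case we discard it. Its length could be $1$ or $2$ as a walk in a loopless graph, but since $G$ is loopless a closed trail has length $\ge2$. By Lemma~\ref{lem:trail-cycles} each nonempty closed trail splits into edge-disjoint cycles of $G$. Replacing each $C$ by these pieces gives a multiset in which every original edge appears exactly as often as in $\mathcal C$, namely twice.

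The main obstacle is carefully verifying the closed-trail claim in the projection. A cycle $C$ of $H$ passing through a new cycle may visit it several times, so its contraction revisits the vertex $v$. That is allowed in a trail, but we must confirm the sequence is a genuine closed trail with consistent endpoints. This holds because contraction identifies the $v_i$ without altering original edges. Everything else is routine bookkeeping, combined with the order of operations: loops first, then splitting, then suppressing degree-$2$ vertices.
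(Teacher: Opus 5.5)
Your reduction is correct, but it takes the classical route rather than the paper's. Your loop removal and restoration, your rerouting of a cycle $D$ through arcs of the new cycles to show bridgelessness, and your contraction-and-trail-splitting projection all match the paper's Steps 1, 3 and 4.

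The difference is in how you reach a cubic graph.
\begin{itemize}
\item The paper blows up \emph{every} non-isolated vertex of the loopless graph. Degree $2$ vertices become a digon of parallel replacement edges and degree $3$ vertices become a triangle, so each vertex of $H$ meets exactly two replacement edges and one old edge. $H$ is therefore cubic in one step. There is no suppression, no separate treatment of cycle components or of the case $x=y$, and a single contraction argument handles the projection. That argument includes the observation that a cover cycle cannot use exactly one old edge.
\item You blow up only vertices of degree $\ge 4$ and then suppress degree-$2$ vertices. This yields a smaller cubic graph, but it costs some verification that you leave implicit.
\end{itemize}

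The points you should state explicitly are these.
\begin{itemize}
\item Suppression preserves bridgelessness. A cycle through $e_1$ must also use $e_2$, because $v$ has degree $2$, so it shortcuts to a cycle through the new edge $xy$. Cycles through other edges shortcut the same way.
\item With your order of operations, the case $x=y$ can only occur in a digon component. If $x$ had degree $3$ and were joined by two parallel edges to a degree-$2$ vertex, its third edge would be a bridge.
\item The graph obtained right after splitting is not yet cubic, contrary to your sentence. It becomes cubic only after suppression. The hypothesis is applied to the suppressed graph, the cover is lifted back through the suppressions, and only then is it projected through the contractions.
\end{itemize}

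Your contraction-map justification of the closed-trail claim is sound. It works uniformly whether two consecutive original edges of a cover cycle meet at an unsplit vertex or are separated by a run of new edges inside a single new cycle.
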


\begin{proof}
The construction may be carried out simultaneously in all connected
components. Equivalently, one may apply it componentwise and take the
multiset union of the resulting covers. We proceed in four steps.

\medskip
\noindent\textbf{Step 1: remove loops.}
Let $L$ be the set of loop edges of $G$, and let $G_0=G-L$. A loop does not
join two distinct vertices and cannot occur on a path between distinct
vertices. Consequently, for every non-loop edge $e$ and every pair of
vertices $x,y$, the vertices $x$ and $y$ are connected in $G-e$ if and only if
they are connected in $G_0-e$. Thus $G-e$ and $G_0-e$ have the same connected
components, and $e$ is a bridge of $G$ if and only if it is a bridge of $G_0$.
Since $G$ is bridgeless, $G_0$ is loopless and bridgeless.

If $G_0$ has no edges, begin with the empty cycle double cover of $G_0$ and
skip Steps 2--4, proceeding directly to restoration of the loops at the end of
the proof. Hence assume that $G_0$ has an edge. Isolated vertices of $G_0$
may be ignored. Every remaining vertex has degree at least two: if $e$ were
incident with a vertex of degree one, deleting $e$ would isolate that vertex
from the other endpoint of $e$, making $e$ a bridge.

\medskip
\noindent\textbf{Step 2: expand every vertex into a cycle.}
For each nonisolated vertex $v$ of $G_0$, let $d=d_{G_0}(v)\geq2$ and replace
$v$ by a new cycle, taking these replacement cycles to be pairwise
vertex-disjoint:
\[
R_v=v_1v_2\cdots v_dv_1.
\]
When $d=2$, this means two parallel edges joining $v_1$ and $v_2$. Attach
the $d$ edge-ends incident with $v$ bijectively to the $d$ vertices of
$R_v$. More formally, choose a bijection from the incidences $(v,e)$ with
$e\in\dd_{G_0}(v)$ to $V(R_v)$. If $e=uv$ is an edge of $G_0$, join the
chosen attachment vertex in $R_u$ to the chosen attachment vertex in $R_v$.
Denote this new edge by $\widehat e$.

Call the edges $\widehat e$ \emph{old edges} and the edges on the cycles
$R_v$ \emph{replacement edges}. Let $H$ be the resulting graph. Each
vertex of $H$ is incident with two replacement edges and exactly one old
edge, so $H$ is cubic. It is loopless: replacement edges lie on loopless
cycles $R_v$, including the length-two case, and an old edge $\widehat e$
joins $R_u$ to $R_v$ for distinct original endpoints $u\neq v$ because
$G_0$ is loopless.

\medskip
\noindent\textbf{Step 3: prove that $H$ is bridgeless.}
Every replacement edge lies on the cycle $R_v$ containing it. Now consider
an old edge $\widehat e$ corresponding to $e\in E(G_0)$. Since $G_0$ is
loopless and bridgeless, Lemma~\ref{lem:edge-cycle} gives a cycle $D$ of
$G_0$ containing $e$.

We lift $D$ to $H$. Include $\widehat a$ for every edge $a\in E(D)$. At a
vertex $v$ of $D$, exactly two distinct edges of $D$ are incident with $v$.
Their edge-ends were attached to two distinct vertices of $R_v$, because the
attachment map is a bijection. The cycle $R_v$ is the union of two
edge-disjoint paths between those attachment vertices. Choose either path and
include all its replacement edges. We call the chosen path an arc of $R_v$.
In the length-two case, the two arcs are the two individual parallel
replacement edges, so the same description applies.

The resulting subgraph is connected: it is obtained from the connected cycle
$D$ by replacing each vertex with a joining path. At an internal vertex of a
chosen arc, exactly two chosen replacement edges are incident. At an endpoint
of a chosen arc, exactly one chosen replacement edge and one chosen old edge
are incident. No other vertices are used. Hence every vertex of the lifted
subgraph has degree two. The lift is therefore a cycle of $H$, and it
contains $\widehat e$. Thus every edge of $H$ lies on a cycle. By
Lemma~\ref{lem:edge-cycle}, $H$ is bridgeless.

\medskip
\noindent\textbf{Step 4: contract a cover of $H$ back to $G$.}
By the hypothesis of the proposition, $H$ has a cycle double cover
$\mathcal D$. This hypothesis applies whether or not $H$ is connected.
Define a projection from $H$ to $G_0$ by contracting every replacement cycle
$R_v$ to the original vertex $v$, sending each old edge $\widehat e$ to its
original edge $e$, and suppressing all replacement edges.

Consider one occurrence of a cycle $Z$ in the multiset $\mathcal D$. If $Z$
uses no old edge, then its connectedness forces it to lie entirely in one
replacement cycle; its projection is a single vertex, so discard it. Suppose
instead that $Z$ uses an old edge $\widehat e$ with endpoints in distinct
replacement cycles. If this were the only old edge of $Z$, then deleting
$\widehat e$ from $Z$ would leave a path between its endpoints using only
replacement edges. This is impossible because distinct replacement cycles
are distinct components of the replacement-edge subgraph. Hence $Z$ uses at
least two old edges. Read them in their cyclic order. Each vertex of $H$ is
incident with only one old edge, so between two successive old edges, $Z$
follows a nonempty path of replacement edges within a single cycle $R_v$.
After contraction, the two old edges are therefore incident with the same
original vertex $v$. The cyclic sequence of old edges consequently becomes a
closed walk in $G_0$. It may visit an original vertex more than once if $Z$
passes through the corresponding replacement cycle in more than one segment.

No old edge occurs twice in this walk, because $Z$ is a cycle and hence uses
each edge of $H$ at most once. The closed walk is therefore a closed trail.
By Lemma~\ref{lem:trail-cycles}, decompose it into edge-disjoint cycles of
$G_0$, and insert all those cycles into a new multiset $\mathcal C_0$. Carry
out this operation separately for every occurrence of every member of
$\mathcal D$; if equal cycles result from different occurrences, retain their
multiplicity.

Each edge $e\in E(G_0)$ has exactly one old representative $\widehat e$ in
$H$. The edge $\widehat e$ occurs in exactly two members of $\mathcal D$,
counted with multiplicity. Each occurrence contributes one occurrence of
$e$ to exactly one cycle produced by the corresponding trail decomposition.
No other operation creates or deletes an old-edge occurrence. Hence every
edge of $G_0$ occurs in exactly two members of $\mathcal C_0$, so
$\mathcal C_0$ is a cycle double cover of $G_0$. The isolated vertices ignored
in Step 2 have no incident edges and impose no additional covering condition.

Finally, insert two copies of each loop in $L$. Each loop is itself a cycle,
and no cycle of $G_0$ contains a deleted loop. Thus every restored loop and
every non-loop edge of the original graph is covered exactly twice. The
result is a cycle double cover of $G$.
\end{proof}

From now on, $G$ denotes a loopless bridgeless cubic multigraph.

\section{Three-bit colors and flows}
\label{sec:flows}

Let $\F_2=\{0,1\}$ be the field with arithmetic modulo $2$. The set
$\bits=\{000,001,010,011,100,101,110,111\}$
is the three-dimensional vector space $\F_2^3$. We write its addition as
$\xor$ and call it \emph{XOR}. The zero vector is $000$, and addition is
performed coordinate by coordinate modulo $2$. For example,
$100\xor010=110,\ 101\xor101=000$.
Thus every $s\in\bits$ satisfies $s\xor s=000$, or equivalently $-s=s$.

\begin{definition}
Let $A$ be an abelian group, written additively, and choose an orientation of
each edge of a graph. Interpret an empty sum as the identity element
$0\in A$. An \emph{$A$-flow} is a map $f\colon E(G)\to A$ such that, at every
vertex $v$,
\[
\sum_{\substack{e\in E(G)\\ e\text{ directed out of }v}}f(e) = 
\sum_{\substack{e\in E(G)\\ e\text{ directed into }v}}f(e)
\tag{4.1}.
\]
It is \emph{nowhere-zero} if $f(e)\neq0$ for every edge $e$. A
\emph{three-bit flow} is an $A$-flow with $A=\F_2^3$; in that case the sums in
\textup{(4.1)} are XOR sums. If loops are present, an oriented loop has its
tail and head at the same vertex and therefore contributes once to each side
of \textup{(4.1)}. In the main argument, however, $G$ is loopless by the
standing assumption following Proposition~\ref{prop:cubic-reduction}.
\end{definition}

\begin{definition}
An \emph{integer flow} is an orientation of the edges together with a map
$\varphi\colon E(G)\to\mathbb Z$ satisfying the conservation equation
\textup{(4.1)} with ordinary integer addition. For an integer $k\geq2$, it is
an \emph{integer $k$-flow} if $|\varphi(e)|<k$ for every edge $e$, and it is
\emph{nowhere-zero} if $\varphi(e)\neq0$ for every edge $e$. Thus the values
of a nowhere-zero integer $k$-flow belong to
$\{\pm1,\ldots,\pm(k-1)\}$.
\end{definition}

For a flow with values in any abelian group, reversing one oriented edge and
replacing its value by its additive inverse preserves every conservation
equation. Consequently, existence of a nowhere-zero flow is independent of
the chosen orientation.

For $A=\F_2^3$, every value is its own additive inverse, so an edge value does
not even have to be changed when its orientation is reversed. More directly,
XORing the incoming side of \textup{(4.1)} onto both sides gives
\[
\left(\bigxor_{e\text{ directed out of }v}f(e)\right)
\xor
\left(\bigxor_{e\text{ directed into }v}f(e)\right)=000.
\]
Because $G$ is loopless, the edges incident with $v$ are partitioned by these two
indexing sets. Hence \textup{(4.1)} is equivalent to
\[
\bigxor_{e\in\dd(v)} f(e)=000.
\tag{4.2}
\]
At a cubic vertex, equation \textup{(4.2)} is the XOR-zero rule for its three
incident edge values.

We use two standard theorems as black boxes.

\begin{theorem}[Seymour's 6-flow theorem \cite{Seymour1981}]
\label{thm:seymour}
Every bridgeless graph has a nowhere-zero integer 6-flow.
\end{theorem}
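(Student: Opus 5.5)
We do not reprove Theorem~\ref{thm:seymour} in this paper; it is an external input. Still, here is the route we would follow, which is Seymour's original one. First, reduce to a minimal counterexample that is cubic and $3$-edge-connected. Deleting loops, suppressing degree-two vertices and splitting high-degree vertices do not affect the existence of a nowhere-zero flow; the splitting can be done as in the vertex expansion of Proposition~\ref{prop:cubic-reduction}, since contracting the replacement cycles carries a flow back. A $2$-edge-cut $\{a,b\}$ is handled by contracting one side, finding flows on the two smaller graphs, and gluing them. The gluing rescales one of the flows so that its value on $a$ agrees with the other, and the value on $b$ is then forced to agree by conservation across the cut. So we may assume $G$ is $3$-edge-connected.

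Second, we aim for a nowhere-zero $\mathbb Z_2\times\mathbb Z_3$-flow rather than an integer flow directly. Tutte's group-flow theorem (Theorem~\ref{thm:tutte}, stated next) then converts it into an integer $6$-flow, since the group has order $6$. Alternatively, one converts the $\mathbb Z_3$ part into an integer $3$-flow $g'$ and the $\mathbb Z_2$ part into an integer $2$-flow $f'$ supported on an even subgraph, and takes $2f'+g'$ or $3f'+g'$. The combinatorial core is \emph{Seymour's closure lemma}: in a $3$-edge-connected graph one can choose pairwise disjoint cycles $C_1,\dots,C_t$ whose union $U$ has \emph{$2$-closure} equal to $E(G)$. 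Here the $2$-closure is obtained by repeatedly adjoining a cycle that has at most two edges outside the current edge set.

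Granting the closure lemma, the flow is built in two layers. The $\mathbb Z_2$ coordinate is the indicator of $U$. Because the $C_i$ are disjoint cycles, $U$ is an even subgraph, so this is a $\mathbb Z_2$-flow, and it is nonzero on $U$. The $\mathbb Z_3$ coordinate is built along the closure sequence, starting from the zero flow. At each step we add a cycle $D$ with at most two new edges, and we add a multiple $c\in\mathbb Z_3$ of the circulation around $D$ to the current $\mathbb Z_3$-flow. Each new edge forbids at most one value of $c$, so with three choices some $c$ makes both new edges nonzero. Edges already processed outside $U$ may be disturbed, so the bookkeeping must keep the invariant that every edge already absorbed into the closure but not in $U$ remains nonzero. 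Seymour arranges this by processing in the right order (e.g.\ absorbing each edge exactly when it is new), and in the worst case zero values are permitted only on $U$. The pair of coordinates is then nowhere-zero.

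The main obstacle is the closure lemma itself. To prove it, we would take a maximal family of disjoint cycles whose closure is as large as possible, and suppose the closure $K$ is not all of $E(G)$. Contracting $K$ and using $3$-edge-connectivity yields a vertex or cut with at least three edges leaving $K$. From this we would extract a new cycle, disjoint from the chosen ones, that can be added to the family. Alternatively, we would find a cycle with at most two edges outside $K$, contradicting closure. Making this extraction argument precise, with the induction on $|V(G)|$ and the disjointness requirement, is where the real work lies.
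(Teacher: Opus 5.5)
The paper does not prove this theorem; it imports it from Seymour as one of its two black boxes. So treating it as an external input is exactly the paper's treatment, and your outlined route (closure lemma, a $\mathbb Z_2\times\mathbb Z_3$-flow whose $\mathbb Z_2$-part is the indicator of $U$, then Theorem~\ref{thm:tutte}) is indeed Seymour's. As a sketch of that route, however, two steps fail as written.

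First, the $\mathbb Z_3$ layer cannot be built forward along the closure sequence. A later cycle $D$ has at most two edges outside the current set, but it may pass through many earlier absorbed edges outside $U$. Each of those forbids one value of $c$, so all three values can be blocked, and ``absorbing each edge exactly when it is new'' does not change this. Process the sequence backwards instead. Write $U=X_0\subset X_1\subset\cdots\subset X_m=E(G)$ with $X_i=X_{i-1}\cup E(D_i)$, put $\phi_m=0$, and set $\phi_{i-1}=\phi_i+c_i\chi_{D_i}$. Since $E(D_i)\subseteq X_i$, no edge of $E(G)\setminus X_i$ lies on $D_i$. Those edges keep their nonzero values, and only the at most two edges of $E(D_i)\setminus X_{i-1}$ constrain $c_i$.

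Second, your $2$-edge-cut gluing by ``rescaling'' is unavailable. Integer $6$-flows cannot be rescaled within the bound, and no automorphism of $\mathbb Z_2\times\mathbb Z_3$ sends $(1,0)$ to $(0,1)$. The standard move is to contract one cut edge $a$. A nowhere-zero flow on the smaller bridgeless graph $G/a$ extends uniquely to $G$ with $f(a)=\pm f(b)\neq0$. (In your integer variant, $2f'+g'$ vanishes where $f'=\pm1$ and $g'=\mp2$; use $f'+2g'$ or $3f'+g'$.)

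The closure lemma, which carries the real difficulty, is left unproved, and your extraction sketch does not yet contain the mechanism. The usual argument has three steps. Keep the closure $K$ connected. Then $2$-closedness forces every vertex outside $V(K)$ to have at most one edge into $V(K)$, since two such edges close a cycle with exactly two edges outside $K$. Now take a leaf block $B$, with cut vertex $c$, of a component of $G-V(K)$, or the whole component if it is $2$-connected. In a cubic graph, $c$ then has no edge to $K$ and exactly one edge leaving $B$. So $3$-edge-connectivity of the cut around $V(B)$ yields two distinct vertices of $B-c$ attached to $K$. A cycle of $B$ through both is disjoint from the chosen cycles and strictly enlarges the closure.

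This step uses cubicity, so your reduction must deliver a graph that is simultaneously cubic and $3$-edge-connected. The vertex expansion of Proposition~\ref{prop:cubic-reduction} enlarges the graph, so it cannot be applied to a minimal counterexample, while contracting a cut edge destroys cubicity. You need one of two things. Either use a splitting-off lemma at vertices of degree at least four that lowers the edge count, so that an edge-minimal counterexample is both cubic and $3$-edge-connected. Or, staying among cubic graphs, find a genuine way to match the flow values on the two new edges when a $2$-edge-cut is split.
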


\begin{theorem}[Tutte's group-flow theorem
\cite{Tutte1954,Zhang2012}]
\label{thm:tutte}
Let $k\geq2$, and let $A$ be any finite abelian group of order $k$. A finite
graph has a nowhere-zero $A$-flow if and only if it has a nowhere-zero integer
$k$-flow.
\end{theorem}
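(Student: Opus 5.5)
The plan has two independent parts. The first shows that whether a nowhere-zero $A$-flow exists depends only on $k=|A|$. The second identifies that common answer with the existence of a nowhere-zero integer $k$-flow by working in $A=\mathbb Z_k$. Throughout I fix an orientation, which is harmless by the remark after the definitions: reversing an edge and negating its value preserves every conservation equation.

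\textbf{Step 1: counting is group-independent.} Let $F(G;A)$ denote the number of nowhere-zero $A$-flows on $G$ for the fixed orientation. I will prove by induction on $|E(G)|$ that $F(G;A)$ equals a polynomial in $|A|$ with integer coefficients determined by $G$ alone.
\begin{itemize}
\item If $G$ is edgeless, then $F=1$.
\item If $e$ is a loop, it imposes no conservation constraint, so $F(G;A)=(|A|-1)\,F(G-e;A)$.
\item If $e=uv$ is a non-loop edge, I show $F(G/e;A)=F(G;A)+F(G-e;A)$. Every $A$-flow on $G/e$ that is nowhere-zero off $e$ extends uniquely to a flow on $G$, with the value on $e$ forced by conservation at $u$. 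The extensions in which that forced value is $0$ are exactly the nowhere-zero flows of $G-e$.
\end{itemize}
Hence $F(G;A)=F(G/e;A)-F(G-e;A)$, and induction gives the polynomial claim. Consequently, for any two abelian groups of order $k$, one has a nowhere-zero flow if and only if the other does. It therefore suffices to treat $A=\mathbb Z_k$.

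\textbf{Step 2: from an integer flow to a $\mathbb Z_k$-flow.} Given a nowhere-zero integer $k$-flow, reduce it modulo $k$. Conservation survives reduction, and since $0<|\varphi(e)|<k$, no value becomes $0$.

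\textbf{Step 3: from a $\mathbb Z_k$-flow to an integer flow.} This is the main obstacle. Given a nowhere-zero $\mathbb Z_k$-flow $g$, lift each value to an integer $\varphi(e)\in\{1,\dots,k-1\}$ or to $\varphi(e)-k\in\{-(k-1),\dots,-1\}$. Every such lift is congruent to $g(e)$, nonzero, and of absolute value less than $k$.
\begin{itemize}
\item For a lift $\varphi$, define the excess $x(v)$ as outflow minus inflow. Each $x(v)$ is divisible by $k$, and $\sum_v x(v)=0$.
\item Among all lifts, choose one minimizing $\sum_v|x(v)|$. Suppose this sum is nonzero, and pick a vertex $s$ with $x(s)>0$.
\item Let $S$ be the set of vertices reachable from $s$ by directed paths in an auxiliary digraph. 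In this digraph, an edge directed out of a vertex with $\varphi(e)>0$ is traversed forward, and one with $\varphi(e)<0$ is traversed backward. These are exactly the edges whose value can be changed by $\mp k$ while keeping the new value nonzero and of absolute value less than $k$.
\item If $S$ contains a vertex $t$ with $x(t)<0$, shift every edge of a reachable $s$--$t$ path by $k$ in the appropriate direction. This keeps $\varphi$ a valid lift, lowers $x(s)$ by $k$, raises $x(t)$ by $k$, and leaves the other excesses unchanged. The total $\sum_v|x(v)|$ drops by $2k$, contradicting minimality.
\item Otherwise every vertex of $S$ has $x\ge 0$ and $s\in S$ has $x(s)>0$, so the net outflow across the cut $\delta(S)$ is $\sum_{v\in S}x(v)>0$. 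But by maximality of $S$, every cut edge is oriented so that it contributes nonpositively to that net outflow, which is a contradiction.
\end{itemize}
Hence the minimizing lift has all excesses zero and is a nowhere-zero integer $k$-flow.

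The delicate point is the cut-sign bookkeeping at the end of Step 3. It requires checking that the traversal rule exactly captures the edges whose value can be shifted by $k$ while staying in $\{\pm1,\dots,\pm(k-1)\}$, and that the remaining cut edges all carry net flow into $S$. Combining Steps 1--3 proves the theorem.
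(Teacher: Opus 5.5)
Your proof is correct, but the paper gives no proof to compare it with: it treats this theorem as one of its two external black boxes and only cites Tutte and Zhang's monograph.

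\textbf{Your route.} You supply the standard self-contained argument. Deletion--contraction shows that the number of nowhere-zero $A$-flows is a polynomial in $|A|$ (Tutte's flow polynomial), which settles group-independence. Reduction mod $k$ and the minimal-excess lifting argument then handle $\mathbb Z_k$-flows versus integer $k$-flows.

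\textbf{The step you flagged.} The cut bookkeeping does check out. With arcs pointing in the direction of positive flow, closure of $S$ forces $\varphi(e)<0$ on every edge directed out of $S$ and $\varphi(e)>0$ on every edge directed into $S$. Hence $\sum_{v\in S}x(v)\le 0$, contradicting $x(s)>0$ with $x\ge 0$ on $S$. In the augmenting case, divisibility of all excesses by $k$ is what makes $|x(s)|$ and $|x(t)|$ each drop by exactly $k$.

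\textbf{Two wording repairs.}
\begin{enumerate}
\item In Step 1, $G/e$ has no edge $e$, so ``nowhere-zero off $e$'' does not make sense there. The bijection is between nowhere-zero flows of $G/e$ and flows of $G$ that are nonzero off $e$. Balance at $v$ is automatic once the value on $e$ balances $u$: the conservation sum at the merged vertex is the sum of those at $u$ and $v$, the two contributions of $e$ cancel, and edges parallel to $e$ become loops.
\item In Step 3, take the $s$--$t$ path to be a simple directed path, so that no edge is shifted twice.
\end{enumerate}

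\textbf{What each route buys.} Citing keeps the paper short. Your version makes the dependence explicit, and it also shows that the paper needs less than the full theorem. Corollary~\ref{cor:three-bit-flow} uses only the direction integer $8$-flow $\Rightarrow$ $\F_2^3$-flow, which follows from your Steps 2 and 1 alone. The harder Step 3 is never used in the cycle double cover argument.
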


A nowhere-zero integer 6-flow is also a nowhere-zero integer 8-flow: the same
orientation and the same edge values satisfy
$0<|\varphi(e)|<6<8$ for every edge $e$. Since $\F_2^3$ is an abelian group
of order eight, the two theorems give the following input.

\begin{corollary}[Three-bit flow input]
\label{cor:three-bit-flow}
Every bridgeless graph has a nowhere-zero three-bit flow.
\end{corollary}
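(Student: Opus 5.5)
The corollary follows by chaining the two black-box theorems. Let $G$ be a finite bridgeless graph. By Theorem~\ref{thm:seymour}, $G$ has a nowhere-zero integer $6$-flow: an orientation together with $\varphi\colon E(G)\to\mathbb Z$ satisfying \textup{(4.1)} and $0<|\varphi(e)|<6$ for every edge. The same orientation and values satisfy $0<|\varphi(e)|<8$, so $\varphi$ is also a nowhere-zero integer $8$-flow.

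Next apply Theorem~\ref{thm:tutte} with $k=8$ and $A=\F_2^3$. This is an abelian group of order $8$, with coordinatewise addition modulo $2$. The ``if'' direction of Theorem~\ref{thm:tutte} then yields a nowhere-zero $A$-flow on $G$. By definition, such a flow is a nowhere-zero three-bit flow.

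Two bookkeeping points need checking.

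\begin{enumerate}
\item The flow notions must match. Theorem~\ref{thm:tutte} is stated for a fixed orientation, while our definition fixes an orientation first. This causes no problem: the paper already observed that existence of a nowhere-zero flow does not depend on the chosen orientation, since one may reverse an edge and negate its value.
\item Loops must be allowed. The corollary concerns an arbitrary bridgeless graph, and both black boxes apply to general finite graphs, including multigraphs with loops. The definition of an $A$-flow already explains how an oriented loop contributes to \textup{(4.1)}.
\end{enumerate}

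In the later argument the corollary is only used for loopless cubic $G$, where \textup{(4.2)} is available.

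No step is a real obstacle; the only care needed is confirming that the hypotheses of Theorem~\ref{thm:tutte} cover multigraphs. If one wished to avoid that question, one could apply the corollary only to the loopless cubic graph produced by Proposition~\ref{prop:cubic-reduction}, which is all the later argument needs.
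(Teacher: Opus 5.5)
Your proof is correct and matches the paper's argument: Seymour's theorem gives a nowhere-zero integer $6$-flow, which is also an integer $8$-flow since $0<|\varphi(e)|<6<8$, and Tutte's theorem with $k=8$ and $A=\F_2^3$ then gives the three-bit flow. Your remarks on orientation and loops are harmless additions. The paper covers them through its earlier observation that the existence of a nowhere-zero flow does not depend on the orientation, and through its convention for oriented loops.
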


\begin{proof}
Use Theorem~\ref{thm:seymour} to obtain a nowhere-zero integer 6-flow, regard
it as an integer 8-flow, and apply Theorem~\ref{thm:tutte} with $k=8$ and
$A=\F_2^3$.
\end{proof}

\begin{lemma}[The local Fano-line rule]
\label{lem:fano-line}
Let $f$ be a nowhere-zero three-bit flow on a loopless graph $G$, let $v$ be a
cubic vertex, and let $x,y,z$ be the values of the three edges incident with
$v$, listed in any order. Then $x,y,z$ are nonzero and pairwise distinct, and
$z=x\xor y$ for this ordering.
\end{lemma}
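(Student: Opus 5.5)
The statement is a direct consequence of the conservation law in its XOR form \textup{(4.2)} together with the nowhere-zero hypothesis. I will first pin down the local situation. Since $G$ is loopless and $v$ is cubic, $\dd(v)$ consists of exactly three distinct edges, each occurring once. Let them be $a,b,c$ with $f(a)=x$, $f(b)=y$, $f(c)=z$. As observed after the definition of flows, orientations play no role over $\F_2^3$, because each value is its own inverse. Equation \textup{(4.2)} at $v$ therefore reads $x\xor y\xor z=000$.

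Next I will read off the three claims.

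\textbf{Nonzero values.} These are immediate, since $f$ is nowhere-zero.

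\textbf{The relation $z=x\xor y$.} XOR both sides of $x\xor y\xor z=000$ with $z$ and use $z\xor z=000$. This gives $x\xor y=z$. The XOR-zero equation is symmetric in the three values, so the same computation gives the analogous identity for any listing order. This justifies the phrase ``listed in any order''.

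\textbf{Pairwise distinctness.} I argue by contradiction. If two of the values coincided, say $x=y$ after relabeling, then $x\xor y=000$. Hence $z=000$, which contradicts the nowhere-zero property. By the symmetry already noted, the cases $x=z$ and $y=z$ are handled in the same way.

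There is no genuine obstacle here. The only point requiring care is that looplessness is what guarantees the three incident edges are distinct and each contributes exactly once to \textup{(4.2)*}. Without it, a loop would contribute twice and cancel in the XOR sum. This is the reason the lemma assumes $G$ is loopless.
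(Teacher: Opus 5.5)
Your proposal is correct and follows the paper's proof essentially step for step. Like the paper, you read $x\xor y\xor z=000$ off \textup{(4.2)}, XOR with $z$ to obtain $z=x\xor y$, and rule out coincidences because any equality forces the remaining value to be $000$; the only difference is that you appeal to symmetry for the cases $x=z$ and $y=z$, which the paper writes out separately.
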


\begin{proof}
Equation \textup{(4.2)} gives $x\xor y\xor z=000$. Therefore
$x\xor y=(x\xor y\xor z)\xor z=000\xor z=z$.
The three values are nonzero because $f$ is nowhere-zero. If $x=y$, then
$z=x\xor y=000$, a
contradiction. If $x=z$, then the flow equation gives $y=000$, and if $y=z$,
it gives $x=000$. Hence the values are pairwise distinct.
\end{proof}

For completeness, the points of the projective plane over $\F_2$ may be
identified with the seven nonzero vectors in $\F_2^3$: each one-dimensional
subspace has a unique nonzero vector. Because distinct nonzero vectors $x$
and $y$ are linearly independent, their span is
$\{000,x,y,x\xor y\}$. Its three nonzero vectors are therefore
$x,y,x\xor y=z$, and the corresponding three points form a line in the Fano
plane. This explains the name of the lemma; no projective geometry is used
later.

\section{The triangle at a cubic vertex}
\label{sec:vertex-triangle}

Fix a nowhere-zero three-bit flow $f$. Because $G$ is loopless and cubic, the
three members of $\dd(v)$ are three distinct edge objects, even when some of
them are parallel in $G$. At every vertex $v$, choose an arbitrary ordering
$a_v,b_v,c_v$ of these edges. When one vertex is fixed, abbreviate this
ordering to $a,b,c$ and put
\[
x=f(a),\qquad y=f(b),\qquad z=f(c)=x\xor y.
\]
By Lemma~\ref{lem:fano-line}, $x,y,z$ are nonzero and pairwise distinct. The
identity $z=x\xor y$ holds for every ordering $a,b,c$, so the construction
does not require a specially chosen order.

For $t\in\bits$, the map $\tau_t\colon\bits\to\bits$ defined by
$\tau_t(s)=t\xor s$ is a bijection, since
\[
\tau_t(\tau_t(s))=t\xor(t\xor s)=(t\xor t)\xor s=s.
\]
Thus $\tau_t$ is its own inverse. We call it translation by $t$. Apply this
translation to the three labels $000,x,z$ to obtain
\[
t,\qquad t\xor x,\qquad t\xor z.
\tag{5.1}
\]
The labels $000,x,z$ are distinct by Lemma~\ref{lem:fano-line}, and a bijection
preserves distinctness, so the three labels in \textup{(5.1)} are distinct.
Regard them as the vertices of an abstract triangle. For an unordered pair of
labels, its \emph{XOR difference} means the XOR of its two elements. XOR
difference is invariant under simultaneous translation: for all
$p,q,t\in\bits$, $(t\xor p)\xor(t\xor q)=p\xor q$.
Assign the three sides to $a,b,c$ as follows:
\[
\begin{array}{c|c|c}
\text{graph edge}&\text{two labels on its triangle side}&
\text{XOR difference}\\
\hline
a&\{t,t\xor x\}&x=f(a)\\
b&\{t\xor x,t\xor z\}&x\xor z=y=f(b)\\
c&\{t,t\xor z\}&z=f(c)
\end{array}
\tag{5.2}
\]
The first and third rows follow immediately from the definitions of $x$ and
$z$. For the middle row,
\[
(t\xor x)\xor(t\xor z)=x\xor z
=x\xor(x\xor y)=y.
\tag{5.3}
\]
Thus the XOR difference of the side assigned to each edge $e\in\dd(v)$ is
exactly its flow value $f(e)$. In particular, every side in \textup{(5.2)}
contains two distinct labels because its XOR difference is nonzero.

\begin{lemma}[Triangle parity]
\label{lem:triangle-parity}
For every $s\in\bits$, exactly zero or two of the three side sets in
\textup{(5.2)} contain $s$.
\end{lemma}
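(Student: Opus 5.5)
The plan is to observe that the three side sets in \textup{(5.2)} are exactly the three two-element subsets of the three-element set
\[
T=\{t,\ t\xor x,\ t\xor z\}.
\]
These three labels are pairwise distinct, as already established after \textup{(5.1)} using Lemma~\ref{lem:fano-line} and the bijectivity of $\tau_t$. Write $p=t$, $q=t\xor x$, $r=t\xor z$. Then the side sets are $\{p,q\}$ for $a$, $\{q,r\}$ for $b$, and $\{p,r\}$ for $c$. In other words, each side omits exactly one vertex of the triangle, and distinct sides omit distinct vertices.

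Next I would split into two cases according to whether $s\in T$. If $s\notin T$, then $s$ lies in none of the side sets, since each side set is a subset of $T$; this gives the count zero. If $s\in T$, then $s$ equals exactly one of $p,q,r$, because these are pairwise distinct. A side set $\{u,w\}$ with $u,w\in T$ contains $s$ precisely when the side does not omit $s$. Exactly one of the three sides omits the vertex $s$ (for example, $b$ omits $p$, $c$ omits $q$, and $a$ omits $r$), so exactly two sides contain $s$.

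The only point requiring care is the distinctness of $p,q,r$. It is what guarantees that "containing $s$" is governed by a single omitted vertex, and that we never get a side with a repeated label or a count of three. This was secured before the lemma: $000,x,z$ are distinct by Lemma~\ref{lem:fano-line}, and translation preserves distinctness. So I expect no real obstacle; the argument is a short case check. One could alternatively verify it by listing, for each of $p,q,r$, which rows of \textup{(5.2)} contain it.
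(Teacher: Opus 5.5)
Your proof is correct and matches the paper's argument: you name the three distinct triangle labels (the paper calls them $\alpha,\beta,\gamma$), observe that the three sides are exactly the three two-element subsets of this set, and split on whether $s$ is one of the three labels (giving two sides) or not (giving zero). Your ``omitted vertex'' phrasing is the same count, and you correctly rely on the distinctness of the three labels established after \textup{(5.1)}.
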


\begin{proof}
Write $\alpha=t,\ \beta=t\xor x,\ \gamma=t\xor z$.
The three labels are distinct, and the three side sets in \textup{(5.2)} are
$\{\alpha,\beta\}$, $\{\beta,\gamma\}$, and
$\{\gamma,\alpha\}$. Each of $\alpha,\beta,\gamma$ therefore belongs to
exactly two side sets. Every $s\in\bits\setminus
\{\alpha,\beta,\gamma\}$ belongs to none of them. These are all eight
possible cases because $\bits$ is the disjoint union of
$\{\alpha,\beta,\gamma\}$ and its complement. This proves the assertion.
\end{proof}

For every incidence $(v,e)$, meaning that $e\in\dd(v)$, define the
\emph{local offset} $g_{v,e}\in\bits$ by
\[
g_{v,e}=
\begin{cases}
000,&e=a_v,\\
f(a_v),&e=b_v,\\
000,&e=c_v.
\end{cases}
\tag{5.4}
\]
The three cases are mutually exclusive and exhaustive because
$a_v,b_v,c_v$ are distinct and together form $\dd(v)$.
These offsets depend only on the fixed flow and the chosen local edge
ordering; they do not depend on the translation parameter $t$.

\begin{lemma}[Uniform side formula]
\label{lem:uniform-side-formula}
For every vertex $v$, every incident edge $e\in\dd(v)$, and every
$t\in\bits$, the side assigned to $e$ in \textup{(5.2)} is
\[
Q_{v,e}(t)
=\{t\xor g_{v,e},\ t\xor g_{v,e}\xor f(e)\}.
\tag{5.5}
\]
This is a two-element subset of $\bits$ whose XOR difference is $f(e)$.
\end{lemma}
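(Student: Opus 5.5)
The plan is a direct case check over the three possibilities $e=a_v$, $e=b_v$, $e=c_v$. These cases are mutually exclusive and exhaustive, as noted after \textup{(5.4)}. In each case I substitute the offset from \textup{(5.4)} into \textup{(5.5)} and compare with the corresponding row of \textup{(5.2)}, using the abbreviations $x=f(a_v)$, $y=f(b_v)$, $z=f(c_v)=x\xor y$ from Section~\ref{sec:vertex-triangle}.

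For $e=a_v$, the offset is $g_{v,e}=000$, so \textup{(5.5)} gives $\{t,\ t\xor x\}$, which is the first row of \textup{(5.2)}. For $e=c_v$, the offset is again $000$, and \textup{(5.5)} gives $\{t,\ t\xor z\}$, the third row. For $e=b_v$, the offset is $g_{v,e}=x$, and \textup{(5.5)} gives $\{t\xor x,\ t\xor x\xor y\}$. Since $x\xor y=z$ by Lemma~\ref{lem:fano-line}, this equals $\{t\xor x,\ t\xor z\}$, the middle row. The middle case is the only step where anything happens, namely rewriting $x\xor y$ as $z$, and it is the closest thing to an obstacle; it is immediate from the Fano-line rule.

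For the final sentence, the XOR difference of the set in \textup{(5.5)} is $(t\xor g_{v,e})\xor(t\xor g_{v,e}\xor f(e))=f(e)$. This uses associativity, commutativity and $s\xor s=000$, in line with the translation invariance already noted. Because $f$ is nowhere-zero, $f(e)\neq000$, so the two listed elements are distinct and the set really has two elements. This also agrees with the XOR-difference column of \textup{(5.2)}, including the computation \textup{(5.3)} for the middle row.
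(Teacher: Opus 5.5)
Your proposal is correct and follows the paper's proof essentially line by line. It uses the same three-case substitution of the offsets from \textup{(5.4)} into \textup{(5.5)}, where the only nontrivial step is rewriting $x\xor y=z$ in the $b_v$ case. It then gives the same cancellation computation for the XOR difference and the same nowhere-zero argument that the two entries are distinct.
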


\begin{proof}
Fix $v$ and again write $a,b,c$ and $x,y,z$ for the local ordering and flow
values. If $e=a$, then $g_{v,a}=000$, and \textup{(5.5)} is
$\{t,t\xor x\}$. If $e=b$, then $g_{v,b}=x$, and
\[
Q_{v,b}(t)=\{t\xor x,t\xor x\xor f(b)\}
=\{t\xor x,t\xor x\xor y\}
=\{t\xor x,t\xor z\}.
\]
If $e=c$, then $g_{v,c}=000$, and \textup{(5.5)} is
$\{t,t\xor z\}$. These are precisely the three rows of \textup{(5.2)}.
Finally, the XOR difference of the two entries in \textup{(5.5)} is
\[
(t\xor g_{v,e})\xor
(t\xor g_{v,e}\xor f(e))=f(e),
\]
because the two copies of both $t$ and $g_{v,e}$ cancel. This value is
nonzero because $f$ is nowhere-zero, so the two entries are distinct.
\end{proof}

The offset belongs to the incidence $(v,e)$, not to the edge $e$ alone.
Consequently, the two endpoints of one edge may assign it different offsets;
the next section addresses exactly this compatibility issue.

\section{Edge-gluing}
\label{sec:edge-gluing}

Continue with the loopless cubic multigraph $G$, the nowhere-zero three-bit
flow $f$, and the local edge orderings fixed in
Section~\ref{sec:vertex-triangle}. Each vertex has its own labeled triangle.
We seek one translation $t_v\in\bits$ for every vertex $v$ such that, for
every edge $e=uv$, the side assigned to $e$ by the triangle at $u$ is the
same unordered two-element set as the side assigned to $e$ by the triangle
at $v$.

For a bit $\epsilon\in\{0,1\}=\F_2$ and a vector $s\in\bits$, the notation
$\epsilon s$ means scalar multiplication in the vector space $\F_2^3$.
Equivalently, multiplication is performed in each coordinate; thus
$0s=000$ and $1s=s$.

\begin{lemma}[When two labeled sides agree]
\label{lem:pair-agreement}
Let $A,B,p\in\bits$ with $p\neq000$. Then
$\{A,A\xor p\}=\{B,B\xor p\}$ if and only if $A\xor B\in\{000,p\}$.
When these conditions hold, there is a unique bit $\epsilon\in\{0,1\}$ such
that $A\xor B=\epsilon p$.
\end{lemma}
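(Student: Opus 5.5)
The plan is to prove the two directions of the equivalence separately, using only the XOR identities $s\xor s=000$ and the translation invariance recorded in Section~\ref{sec:vertex-triangle}. The uniqueness claim about $\epsilon$ will come last.

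\emph{Sufficiency.} Assume $A\xor B\in\{000,p\}$. If $A\xor B=000$, then $A=B$ and the two sets coincide trivially. If $A\xor B=p$, then $B=A\xor p$ and $B\xor p=A\xor p\xor p=A$. Hence $\{B,B\xor p\}=\{A\xor p,A\}$, which is the same unordered set.

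\emph{Necessity.} Assume $\{A,A\xor p\}=\{B,B\xor p\}$. Then $B$ lies in the left-hand set, so either $B=A$ or $B=A\xor p$. XORing $A$ onto both sides in each case gives $A\xor B=000$ or $A\xor B=p$. The hypothesis $p\neq000$ is not needed for either direction of the equivalence. It is used only to guarantee that both sets really have two elements, consistent with Lemma~\ref{lem:uniform-side-formula}.

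\emph{Uniqueness of $\epsilon$.} Existence is immediate: take $\epsilon=0$ when $A\xor B=000$ and $\epsilon=1$ when $A\xor B=p$, since $0p=000$ and $1p=p$. For uniqueness, $0p=000$ and $1p=p$ are distinct precisely because $p\neq000$. So at most one bit satisfies $A\xor B=\epsilon p$, and this is the only place the hypothesis $p\neq000$ is genuinely required.

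The main obstacle is minor: in the necessity direction one must observe that set equality forces membership of $B$ in the left-hand set. There is no need to match elements in order. Everything else is routine cancellation.
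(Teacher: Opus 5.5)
Your proof is correct and follows essentially the same route as the paper: membership of one element in the other pair gives necessity, direct substitution ($B=A\xor p$, $B\xor p=A$) gives sufficiency, and $p\neq000$ separates $0p$ from $1p$ to give uniqueness of $\epsilon$. Your remark that the equivalence itself holds without $p\neq000$ is accurate; the paper uses that hypothesis only to note that both sets have two elements and to get uniqueness.
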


\begin{proof}
Because $p\neq000$, both displayed sets have two elements. If the unordered
pairs agree, then $A$ belongs to $\{B,B\xor p\}$, so $A=B$ or
$A=B\xor p$. Therefore $A\xor B$ is $000$ or $p$.

Conversely, $A\xor B=000$ implies $A=B$, and the pairs agree term by term.
If $A\xor B=p$, then $A=B\xor p$ and $A\xor p=B$, so the same two elements
occur in the opposite order. Finally, $000\neq p$, so exactly one of the two
possibilities holds. Taking $\epsilon=0$ in the first case and $\epsilon=1$
in the second proves existence and uniqueness of $\epsilon$.
\end{proof}

Fix an edge $e=uv$. The standing loopless assumption gives $u\neq v$.
Define its \emph{offset discrepancy}
\[
d_e=g_{u,e}\xor g_{v,e}.
\tag{6.1}
\]
This definition is independent of whether the endpoints are written as
$u,v$ or as $v,u$, because XOR is commutative.

\begin{proposition}[The edge-gluing equation]
\label{prop:edge-gluing-equation}
Fix translations $t_v\in\bits$ at all vertices, and fix an edge $e=uv$.
The two endpoint triangles assign the same side set to $e$ if and only if
there exists a bit $\epsilon_e\in\{0,1\}$ such that
\[
t_u\xor t_v\xor \epsilon_e f(e)=d_e.
\tag{6.2}
\]
If such a bit exists, it is unique.
\end{proposition}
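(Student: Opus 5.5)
The plan is to reduce the statement directly to Lemma~\ref{lem:pair-agreement} by means of the uniform side formula, Lemma~\ref{lem:uniform-side-formula}. Fix the edge $e=uv$ with $u\neq v$. By \textup{(5.5)}, the side assigned to $e$ by the triangle at $u$ is $Q_{u,e}(t_u)=\{A,A\xor p\}$ with $A=t_u\xor g_{u,e}$ and $p=f(e)$. Likewise, the side assigned at $v$ is $Q_{v,e}(t_v)=\{B,B\xor p\}$ with $B=t_v\xor g_{v,e}$ and the same $p=f(e)$. The key observation is that both endpoints use the same difference vector $p$, since the XOR difference of each side equals the flow value of the edge. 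Moreover, $p\neq000$ because $f$ is nowhere-zero, so the hypotheses of Lemma~\ref{lem:pair-agreement} hold.

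Applying that lemma, the two sides agree if and only if $A\xor B=\epsilon p$ for some bit $\epsilon$, and such a bit is unique when it exists. Expanding gives
\[
A\xor B=t_u\xor t_v\xor g_{u,e}\xor g_{v,e}=t_u\xor t_v\xor d_e,
\]
using the definition \textup{(6.1)}. The condition $t_u\xor t_v\xor d_e=\epsilon f(e)$ is then rearranged by XORing both sides with $d_e\xor\epsilon f(e)$. This uses only that every vector is its own inverse, and it yields exactly \textup{(6.2)} with $\epsilon_e=\epsilon$. The rearrangement is a bijective reformulation for each fixed $\epsilon$, so the equivalence and the uniqueness both transfer. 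Concretely, two bits satisfying \textup{(6.2)} would give $\epsilon f(e)=\epsilon' f(e)$, which forces $\epsilon=\epsilon'$ because $f(e)\neq000$.

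There is no real obstacle here. The only point needing care is the bookkeeping that the offsets $g_{u,e}$ and $g_{v,e}$ belong to different incidences, and that the ordering of $u,v$ is irrelevant. The latter holds because $d_e$ and the sum $t_u\xor t_v$ are symmetric, so I would remark on this explicitly. The standing loopless assumption guarantees that $e$ is a genuine two-endpoint edge, so both incidences $(u,e)$ and $(v,e)$ are defined and distinct.
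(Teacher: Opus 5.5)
Your proposal is correct and follows the paper's proof essentially step for step: you write both endpoint sides via the uniform side formula with the common nonzero difference $f(e)$, apply the pair-agreement lemma, and rewrite $A\xor B=t_u\xor t_v\xor d_e$ into (6.2) by a reversible XOR rearrangement. Your explicit uniqueness check ($\epsilon f(e)=\epsilon' f(e)$ forces $\epsilon=\epsilon'$ because $f(e)\neq000$) is a harmless extra; the paper instead takes uniqueness directly from the lemma.
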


\begin{proof}
By Lemma~\ref{lem:uniform-side-formula}, the endpoint side sets are
\[
\{A_e,A_e\xor f(e)\}
\quad\hbox{and}\quad
\{B_e,B_e\xor f(e)\},
\]
where
\[
A_e=t_u\xor g_{u,e},
\qquad
B_e=t_v\xor g_{v,e}.
\]
The value $f(e)$ is nonzero. Lemma~\ref{lem:pair-agreement}, applied with
$p=f(e)$, therefore says that the two side sets agree if and only if there is
a unique $\epsilon_e\in\{0,1\}$ for which
\[
A_e\xor B_e=\epsilon_e f(e).
\]
Using \textup{(6.1)}, the left side is
\[
A_e\xor B_e
=t_u\xor t_v\xor g_{u,e}\xor g_{v,e}
=t_u\xor t_v\xor d_e.
\]
Consequently, side agreement is equivalent to the following chain of
equivalent equations:
\[
\begin{aligned}
t_u\xor t_v\xor d_e&=\epsilon_e f(e),\\
t_u\xor t_v\xor d_e\xor\epsilon_e f(e)&=000,\\
t_u\xor t_v\xor\epsilon_e f(e)&=d_e.
\end{aligned}
\]
The last equation is \textup{(6.2)}. Each passage merely XORs the same
vector with both sides and is therefore reversible. The uniqueness of
$\epsilon_e$ is the uniqueness supplied by
Lemma~\ref{lem:pair-agreement}.
\end{proof}

Equation \textup{(6.2)} is unchanged if $u$ and $v$ are interchanged, so it
requires no orientation of $e$.
We call the family of equations \textup{(6.2)}, one for every edge, the
\emph{triangle-gluing system}. Its unknowns are the three coordinate bits of
each $t_v$ and one bit $\epsilon_e$ for each edge; the values $f(e)$ and $d_e$
are fixed data. If $f(e)=(f_1(e),f_2(e),f_3(e))$ and similarly for $t_v$ and
$d_e$, then the $i$th coordinate of \textup{(6.2)} is
\[
(t_u)_i\xor(t_v)_i\xor f_i(e)\epsilon_e=(d_e)_i
\qquad(i=1,2,3).
\]
Thus every vector equation is exactly three scalar linear equations over
$\F_2$. Applying Proposition~\ref{prop:edge-gluing-equation} separately to
each edge shows that a family $(t_v)_{v\in V(G)}$ makes all endpoint side sets
agree simultaneously if and only if it can be extended by bits
$(\epsilon_e)_{e\in E(G)}$ to a solution of the triangle-gluing system.
Parallel edges are distinct members of $E(G)$ and therefore contribute
distinct equations, each with its own discrepancy $d_e$ and unknown
$\epsilon_e$.
Hence the vertex triangles can be glued along every edge simultaneously if
and only if this binary system has a solution.

The rest of the main proof shows that this system is always consistent.

\section{A certificate for inconsistent XOR equations}
\label{sec:xor-certificates}

We now analyze the solvability of the triangle-gluing system from
Section~\ref{sec:edge-gluing}. We begin with a basic fact about finite binary
linear systems.

\begin{lemma}[XOR inconsistency certificate]
\label{lem:xor-certificate}
Consider a finite system of linear equations over $\F_2$, written using XOR.
Thus every equation has the form
\[
\bigxor_{j\in J}x_j=b,
\]
where the $x_j$ are bit-valued unknowns, $J$ is a set of indices, and
$b\in\{0,1\}$; variables with coefficient zero are omitted, and the XOR over
the empty set is $0$. The system is inconsistent if and only if the XOR of a
subfamily of its equations has every unknown cancel from the left side and
has right side equal to $1$. Equivalently, an inconsistent system has a
certificate of the form $0=1$.
\end{lemma}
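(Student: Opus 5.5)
The easy direction is immediate. If some subfamily XORs to $0=1$, then any assignment satisfying every equation would also satisfy their XOR, because XOR of equalities is again an equality over $\F_2$. That would force $0=1$, which is absurd, so the system is inconsistent.

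For the converse I would use Gaussian elimination over $\F_2$, keeping track of which original equations each derived equation came from. Encode each equation as a pair $(r,b)$, where $r\in\F_2^n$ is the indicator vector of the index set $J$ and $b$ is the right side. XORing equations then corresponds to adding these vectors in $\F_2^{n+1}$.

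The key steps are as follows.
\begin{enumerate}
\item Run elimination and maintain the invariant that every current row is the XOR of a specified subfamily of the original equations. Record that subfamily as an indicator vector over the equations, and update it by XOR whenever rows are added.
\item Each elementary operation (adding one row to another, or swapping two rows) preserves both the invariant and the solution set, since it is reversible.
\item Elimination ends in row echelon form. If no row has the form $0=1$, assign the free variables arbitrarily, say $0$, and back-substitute through the pivot rows from the bottom up. Each pivot row determines its pivot variable uniquely from later variables, so this produces a solution of the echelon system, hence of the original system. Every zero row then reads $0=0$ and is satisfied.
\item So an inconsistent system must produce a row $0=1$. By the invariant, that row is the XOR of a recorded subfamily of the original equations, and that subfamily is the certificate.
\end{enumerate}
Finally I would check that XOR of a subfamily means XOR of a subset rather than a multiset. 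Using an equation twice cancels it, so reducing the recorded combination modulo $2$ gives a genuine subfamily.

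I would avoid the alternative linear-algebra formulation, that $b$ lies in the column space of $M$ if and only if $b$ is orthogonal to the left kernel of $M$, since it would require justifying the identity $\operatorname{col}(M)=(\ker M^T)^\perp$ over $\F_2$ and so needs its own dimension count. The main obstacle is bookkeeping in the elimination rather than any conceptual difficulty: one must carry the invariant through every operation and handle the back-substitution with free variables carefully. Since the system is finite, elimination terminates, and the argument is routine.
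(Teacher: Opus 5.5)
Your proposal is correct and takes essentially the same route as the paper. Both run Gaussian elimination over $\F_2$ while maintaining the invariant that every row is the XOR of a subfamily of the original equations, tracked by symmetric difference of index sets. Both then back-substitute with the free variables set to zero when no row $0=1$ appears, and both handle the easy direction in the same way.
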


\begin{proof}
If XORing a subfamily of equations gives $0=1$, no assignment can satisfy all
of those equations, so the system is inconsistent.

Conversely, write the system as an augmented matrix $[M\mid b]$ over $\F_2$
and perform Gaussian elimination to row-echelon form. Row interchanges
merely reorder the equations. Replacing a row $R_i$ by $R_i\xor R_j$ with
$i\neq j$ also preserves the solution set: an assignment satisfies $R_i$ and
$R_j$ if and only if it satisfies $R_i\xor R_j$ and $R_j$. These are the only
nontrivial elementary row operations over $\F_2$.

Initially, every row is the XOR of a one-element subfamily of the original
rows. This property is preserved by row interchange, and replacing two such
XORs by their XOR corresponds to taking the symmetric difference of their
index sets. Hence every row produced by elimination is the XOR of a
subfamily of the original rows.

If row-echelon form contains $[0\ \cdots\ 0\mid1]$, then it contains the false
equation $0=1$ and is inconsistent. Conversely, if no such row occurs, set
all free variables to zero and solve successively for the pivot variables,
starting with the last nonzero row. This gives a solution, so the system is
consistent. Therefore an inconsistent system produces a row
$[0\ \cdots\ 0\mid1]$, and the corresponding XOR of original equations is the
required certificate.
\end{proof}

For example, if $p$ and $q$ are bit-valued unknowns, the equations
$p\xor q=0, \ p\xor q=1$ are inconsistent. XORing them cancels both unknowns and gives $0=1$. The
certificate lemma says that every inconsistent binary system has a possibly
larger certificate of exactly this kind.

Here is the matrix form of the argument that follows. Order the scalar
unknowns by collecting the three coordinates of every $t_v$ and then the bits
$\epsilon_e$, and write
\[
\xi\in\F_2^{3|V(G)|+|E(G)|}
\]
for the resulting column vector. Order the three coordinate equations of
\textup{(6.2)} for all edges, concatenate the coordinates of the discrepancies
$d_e$ into a column vector
\[
\mathbf d\in\F_2^{3|E(G)|},
\]
and let $A$ be the corresponding coefficient matrix. The triangle-gluing
system is then
\[
A\xi=\mathbf d.
\tag{7.1}
\]
Lemma~\ref{lem:xor-certificate} says equivalently that this system is
inconsistent exactly when there is a row-selection vector
$y\in\F_2^{3|E(G)|}$ such that
\[
A^{\mathsf T}y=0
\qquad\text{and}\qquad
y^{\mathsf T}\mathbf d=1.
\tag{7.2}
\]
Consequently, $A\xi=\mathbf d$ is solvable if and only if
\[
y^{\mathsf T}\mathbf d=0
\qquad\text{for every }y\in\ker A^{\mathsf T}.
\tag{7.3}
\]
The tester notation below is the edge-block form of this criterion: the three
coordinates of $y$ belonging to edge $e$ are packaged as one vector
$r_e\in\F_2^3$. The rest of the proof gives an elementary description of
$\ker A^{\mathsf T}$ and verifies \textup{(7.3)} without invoking any further
duality theorem.

Every vector equation in \textup{(6.2)} is three scalar bit equations. A
convenient way to choose an XOR of some of those three equations is to use a
\emph{parity tester}.

\begin{definition}
For a tester $r=(r_1,r_2,r_3)\in\bits$ and a label
$s=(s_1,s_2,s_3)\in\bits$, define
\[
r\cdot s=r_1s_1\xor r_2s_2\xor r_3s_3\in\{0,1\}.
\tag{7.4}
\]
Here the products $r_is_i$ are ordinary products of bits, equivalently
products in $\F_2$. Applying $r$ to a three-bit equation means taking the XOR
of precisely those coordinate equations for which the corresponding bit of
$r$ is one. Thus the eight testers in $\bits$ are in bijection with the eight
subsets of the three coordinate equations; the tester $000$ selects none of
them.
\end{definition}

For instance, the tester $r=101$ selects the first and third coordinate
equations, and $101\cdot110=1\xor0\xor0=1$. Directly from \textup{(7.4)},
the dot product is linear in each argument:
\[
r\cdot(s\xor s')=(r\cdot s)\xor(r\cdot s'),
\qquad
(r\xor r')\cdot s=(r\cdot s)\xor(r'\cdot s).
\]
Also, if $r\cdot s=0$ for every $s\in\bits$, then $r=000$: taking $s$ to be
$100$, $010$, and $001$ reads off the three coordinates of $r$.

Assign one tester $r_e\in\bits$ to every edge equation \textup{(6.2)}, apply
$r_e$ to that equation, and XOR the resulting scalar equations over all
edges. The unknowns cancel precisely under the conditions in the next
lemma.

\begin{lemma}[Shape of an inconsistency certificate]
\label{lem:certificate-shape}
The triangle-gluing system is inconsistent if and only if there is a family
of testers $r_e\in\bits$, one for each edge, such that
\[
r_e\cdot f(e)=0
\qquad(e\in E(G)),
\tag{7.5}
\]
\[
\bigxor_{e\in\dd(v)}r_e=000
\qquad(v\in V(G)),
\tag{7.6}
\]
and
\[
\bigxor_{e\in E(G)} r_e\cdot d_e=1.
\tag{7.7}
\]
\end{lemma}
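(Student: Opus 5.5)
The plan is to apply Lemma~\ref{lem:xor-certificate} to the triangle-gluing system and translate its certificate into edge-block form. Any subfamily of the $3|E(G)|$ scalar equations is determined, edge by edge, by which of the three coordinate equations of \textup{(6.2)} for $e$ it contains. By the bijection between testers and subsets of coordinate equations, such a subfamily is therefore the same as a choice of one tester $r_e\in\bits$ per edge, with $r_e=000$ meaning that no equation of $e$ is selected. So it suffices to show the following: the XOR of the selected scalar equations has every unknown cancel exactly when \textup{(7.5)} and \textup{(7.6)} hold, and its right side is then the left side of \textup{(7.7)}.

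To see this, I apply $r_e$ to the vector equation for $e=uv$. By linearity of the dot product, the result is the scalar equation
\[
(r_e\cdot t_u)\xor(r_e\cdot t_v)\xor\epsilon_e\,(r_e\cdot f(e))=r_e\cdot d_e .
\]
Summing over all edges gives a right side of $\bigxor_e r_e\cdot d_e$, which matches \textup{(7.7)}. On the left, the unknown $\epsilon_e$ occurs only in the equation for $e$, so its total coefficient is $r_e\cdot f(e)$. Its cancellation is therefore exactly \textup{(7.5)}.

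For the vertex unknowns, I regroup by vertex. Since $G$ is loopless, each edge contributes $r_e\cdot t_v$ once at each of its two distinct endpoints. The total contribution at $v$ is then $\bigl(\bigxor_{e\in\dd(v)}r_e\bigr)\cdot t_v$. Its $i$th coordinate coefficient is the $i$th bit of $\bigxor_{e\in\dd(v)}r_e$, so all three coordinates of $t_v$ cancel iff \textup{(7.6)} holds. Combining the two directions of Lemma~\ref{lem:xor-certificate} then gives both implications. If the system is inconsistent, a certificate yields testers satisfying \textup{(7.5)}--\textup{(7.7)}. Conversely, such testers yield a subfamily whose XOR reads $0=1$, so the system is inconsistent.

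The only delicate step is the bookkeeping that identifies the coefficient of each scalar unknown in the combined equation. In particular, one must check that parallel edges are handled correctly (each has its own $\epsilon_e$ and its own tester), and that looplessness guarantees $u\neq v$, so no term $r_e\cdot t_v$ is counted twice and cancels spuriously. I expect nothing else to require care, since the rest is linear algebra already packaged in Lemma~\ref{lem:xor-certificate}.
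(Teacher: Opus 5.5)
Your proposal is correct and follows the paper's proof essentially step for step. You identify the certificate's subfamily with one tester per edge via the coordinate-selection bijection, apply $r_e$ to each vector equation, and read off the $\epsilon_e$-coefficient $r_e\cdot f(e)$ and the $t_v$-coefficients $\bigxor_{e\in\dd(v)}r_e$ (using looplessness, as the paper does); both directions then follow from Lemma~\ref{lem:xor-certificate}.
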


\begin{proof}
Expand every equation \textup{(6.2)} into three scalar equations. By
Lemma~\ref{lem:xor-certificate}, inconsistency gives an XOR of scalar edge
equations whose unknown coefficients vanish and whose right side is one.
For each edge $e$, record which of its three coordinate equations were used;
their indicator vector is $r_e\in\bits$. The preceding bijection shows that
this records the selected scalar equations uniquely. If none of the three
equations for $e$ was used, then $r_e=000$.

Applying $r_e$ to the vector equation \textup{(6.2)} gives the scalar equation
$(r_e\cdot t_u)\xor(r_e\cdot t_v) \xor\epsilon_e(r_e\cdot f(e))=r_e\cdot d_e$.
Indeed, the two linearity identities above handle every XOR term. For the
scalar-multiplication term,
$r_e\cdot\bigl(\epsilon_e f(e)\bigr)
=\epsilon_e\bigl(r_e\cdot f(e)\bigr)$, as follows by checking the two possible
values $\epsilon_e=0,1$.

The bit $\epsilon_e$ occurs only in the equation for edge $e$. After tester
$r_e$ is applied, its coefficient is $r_e\cdot f(e)$. Cancellation of every
$\epsilon_e$ is therefore exactly \textup{(7.5)}.

Because $G$ is loopless, the vector $t_v$ occurs once in the equation of every
edge incident with $v$ and nowhere else. After all tested equations are
XORed, put
\[
R_v=\bigxor_{e\in\dd(v)}r_e\in\bits.
\]
The total contribution involving the three coordinate unknowns of $t_v$ is
\[
\bigxor_{e\in\dd(v)}\bigl(r_e\cdot t_v\bigr)
=R_v\cdot t_v
=(R_v)_1(t_v)_1\xor(R_v)_2(t_v)_2\xor(R_v)_3(t_v)_3.
\]
Consequently, all three coordinate unknowns $(t_v)_i$ have coefficient zero
if and only if $(R_v)_i=0$ for $i=1,2,3$, equivalently $R_v=000$. Thus
cancellation of all vertex-translation unknowns is exactly \textup{(7.6)}.

After all unknowns cancel, the XOR of the right sides is
$\bigxor_{e\in E(G)} r_e\cdot d_e$. For the resulting equation to be $0=1$,
this XOR must be one, giving \textup{(7.7)}.

Conversely, suppose testers satisfy \textup{(7.5)}--\textup{(7.7)}. Apply
$r_e$ to the equation for each edge $e$ and XOR the resulting scalar
equations. Conditions \textup{(7.5)} and \textup{(7.6)} cancel every
$\epsilon_e$ and every coordinate of every $t_v$, while \textup{(7.7)} makes
the XOR of the right sides equal to $1$. The result is $0=1$, so
Lemma~\ref{lem:xor-certificate} proves that the gluing system is inconsistent.
\end{proof}

Thus compatibility will follow if we prove that testers satisfying
\textup{(7.5)} and \textup{(7.6)} always give right-side parity zero rather
than one.

\section{The local tester}
\label{sec:local-testers}

Continue with a tester family satisfying the two unknown-cancellation
conditions \textup{(7.5)} and \textup{(7.6)}; condition \textup{(7.7)} is not
assumed. Fix a vertex $v$. Because $G$ is cubic and loopless, its local
ordering $a,b,c$ consists of exactly the three distinct edge objects in
$\dd(v)$. As before, put
\[
x=f(a),\qquad y=f(b),\qquad z=f(c)=x\xor y.
\]
Write $r_a,r_b,r_c$ for the testers attached to these three edges.
Conditions \textup{(7.5)} and \textup{(7.6)} say, respectively,
\[
r_a\cdot x=0,
\qquad
r_b\cdot y=0,
\qquad
r_c\cdot z=0,
\tag{8.1}
\]
and
\[
r_a\xor r_b\xor r_c=000.
\tag{8.2}
\]

Define the bit
\[
\lambda_v=r_b\cdot x.
\tag{8.3}
\]
The local discrepancy contribution at $v$ is
\[
\begin{aligned}
\bigxor_{e\in\dd(v)}\bigl(r_e\cdot g_{v,e}\bigr)
&=(r_a\cdot000)\xor(r_b\cdot x)\xor(r_c\cdot000)\\
&=\lambda_v.
\end{aligned}
\tag{8.4}
\]
Here we used the local offsets $g_{v,a}=000$, $g_{v,b}=x$, and
$g_{v,c}=000$ from \textup{(5.4)}, together with $r\cdot000=0$ for every
tester $r$.

The next lemma gives a graph-theoretic interpretation of this bit.

\begin{lemma}[Local tester parity]
\label{lem:local-tester-parity}
Under conditions \textup{(8.1)} and \textup{(8.2)},
\[
\lambda_v
\equiv\#\{e\in\dd(v):r_e\neq000\}\pmod2.
\tag{8.5}
\]
Thus the local discrepancy contribution is the parity of the number of
nonzero testers at the vertex.
\end{lemma}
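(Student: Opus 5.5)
The proof is a short case analysis on how many of $r_a,r_b,r_c$ are nonzero. Two facts drive it: the linear relation (8.2), and the fact that $x$ and $y$ are linearly independent. The point I expect to need the most care is translating linear independence into the statement that the set of testers orthogonal to both $x$ and $y$ has only two elements.

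\textbf{First identity.} I will derive a symmetric form of $\lambda_v$. By (8.2), $r_c=r_a\xor r_b$, and $z=x\xor y$ by Lemma~\ref{lem:fano-line}. Substituting into the third condition of (8.1) and expanding with bilinearity gives
\[
(r_a\cdot x)\xor(r_a\cdot y)\xor(r_b\cdot x)\xor(r_b\cdot y)=0 .
\]
The first and fourth terms vanish by (8.1). Hence $r_a\cdot y=r_b\cdot x=\lambda_v$.

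\textbf{Second fact: the annihilator.} Let $W=\{r\in\bits: r\cdot x=0,\ r\cdot y=0\}$. By Lemma~\ref{lem:fano-line}, $x$ and $y$ are distinct and nonzero, so they are linearly independent over $\F_2$. The two conditions defining $W$ are therefore independent linear equations on $\F_2^3$, and $W=\{000,w\}$ for a single nonzero $w$. To keep this elementary, as the paper prefers, I would argue directly. The map $r\mapsto(r\cdot x,r\cdot y)$ is linear from $\F_2^3$ to $\F_2^2$. It is surjective, because its image is not contained in any of the three lines of $\F_2^2$: by the nondegeneracy remark after (7.4), that would force $x=000$, $y=000$, or $x\xor y=000$. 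Hence its kernel has $8/4=2$ elements.

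\textbf{Case analysis.} Condition (8.2) rules out exactly one nonzero tester, so the count is $0$, $2$, or $3$.
\begin{itemize}
\item \emph{No nonzero testers.} Then $\lambda_v=r_b\cdot x=0$, which is even.
\item \emph{Exactly two nonzero testers.} If $r_b=000$, then $\lambda_v=0$ immediately. If $r_c=000$, then $r_a=r_b$, and (8.1) gives $r_b\cdot y=0$ directly and $r_b\cdot x=r_a\cdot x=0$, so $\lambda_v=0$. If $r_a=000$, then $\lambda_v=r_a\cdot y=0$ by the first identity. In every subcase the parity is even and $\lambda_v=0$.
\item \emph{All three testers nonzero.} Suppose for contradiction that $\lambda_v=0$. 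Then $r_b\cdot x=0$, and $r_a\cdot y=0$ by the first identity. Together with (8.1), both $r_a$ and $r_b$ lie in $W$. Being nonzero, both equal $w$, so $r_c=r_a\xor r_b=000$, a contradiction. Hence $\lambda_v=1$, matching the odd count $3$.
\end{itemize}
These cases together prove (8.5).
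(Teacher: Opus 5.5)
Your proof is correct. It uses the same ingredients as the paper's proof but organizes them differently.

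The paper completes $x,y$ to a basis $x,y,w$ and tabulates the answer triples of $r_a,r_b,r_c$ on that basis in (8.6), relying on the same identity $r_a\cdot y=\lambda_v$ that you derive. It then splits on the value of $\lambda_v$. For $\lambda_v=1$ all three testers are visibly nonzero. For $\lambda_v=0$ the testers are governed by the bits $A,B,A\xor B$, and an even number of these equal $1$.

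You instead split on the support pattern of $(r_a,r_b,r_c)$, using (8.2) to exclude a single nonzero tester, and deduce $\lambda_v$ from the pattern. Your basis-free fact $|W|=2$ comes from surjectivity of $r\mapsto(r\cdot x,r\cdot y)$ and three applications of nondegeneracy. It replaces the paper's completion of $x,y$ to a basis and its observation that a tester is determined by its three answers on that basis.

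The paper's table gives an explicit parametrization of every tester triple satisfying (8.1)--(8.2) by $(\lambda_v,A,B)$, so both values of $\lambda_v$ are handled uniformly. Your route makes it explicit that the zero- and two-nonzero cases need only (8.1), (8.2) and $z=x\xor y$. The hypothesis that $x,y$ are distinct and nonzero enters only to rule out three nonzero testers with $\lambda_v=0$.
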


\begin{proof}
The colors $x$ and $y$ are distinct and nonzero by
Lemma~\ref{lem:fano-line}. Over $\F_2$, two nonzero vectors are linearly
dependent only when they are equal, so $x$ and $y$ are linearly independent.
Their span is $\{000,x,y,x\xor y\}$ and therefore has four elements. Since
$\F_2^3$ has eight elements, there is a vector $w\in\bits$ outside this span.
The vector $w$ is not a linear combination of $x$ and $y$, so $x,y,w$ are
linearly independent. Three linearly independent vectors in the
three-dimensional space $\F_2^3$ form a basis.

A tester is uniquely determined by its three answers on this basis. Indeed,
if two testers have the same answers, their XOR has dot product zero with
$x,y,w$. Linearity then gives dot product zero with every vector in
$\F_2^3$, so the nondegeneracy observation following \textup{(7.4)} says that
their XOR is $000$. In particular, a tester is $000$ if and only if its
answer triple on $x,y,w$ is $(0,0,0)$.

Define the bits
\[
A=r_a\cdot w,
\qquad
B=r_b\cdot w.
\]
We determine all answers of the three testers. XORing both sides of
\textup{(8.2)} with $r_c$ gives $r_c=r_a\xor r_b$. By linearity of the dot
product, evaluating this identity at $x$ and using $r_a\cdot x=0$ gives
\[
r_c\cdot x=(r_a\cdot x)\xor(r_b\cdot x)=\lambda_v.
\]
Evaluating at $y$ and using $r_b\cdot y=0$ gives
\[
r_c\cdot y=(r_a\cdot y)\xor(r_b\cdot y)=r_a\cdot y.
\]
Finally, $r_c\cdot z=0$ and $z=x\xor y$, so
\[
0=(r_c\cdot x)\xor(r_c\cdot y)
=\lambda_v\xor(r_a\cdot y).
\]
Hence $r_a\cdot y=\lambda_v$. Evaluating $r_c=r_a\xor r_b$ at $w$ also
gives
\[
r_c\cdot w=(r_a\cdot w)\xor(r_b\cdot w)=A\xor B.
\]
On the basis $x,y,w$, the testers therefore have answer triples
\[
\begin{array}{c|ccc}
&x&y&w\\
\hline
r_a&0&\lambda_v&A\\
r_b&\lambda_v&0&B\\
r_c&\lambda_v&\lambda_v&A\xor B
\end{array}
\tag{8.6}
\]

If $\lambda_v=1$, each row has a $1$ among its first two entries. Therefore
all three testers are nonzero. The number of nonzero testers is three, whose
parity is one.

If $\lambda_v=0$, the first two columns are zero. The testers are nonzero
exactly when their last entries $A,B,A\xor B$ are one, because a tester is
zero exactly when all three of its basis answers are zero. The complete list
of possible triples of last entries, obtained from the four possible pairs
$(A,B)\in\{0,1\}^2$, is
\[
(A,B,A\xor B)\in\{(0,0,0),(0,1,1),(1,0,1),(1,1,0)\}.
\]
The first triple has no ones, and each of the other three has two. Hence the
number of nonzero testers is even. The cases $\lambda_v=0$ and
$\lambda_v=1$ exhaust the two possible values of the bit $\lambda_v$; in both
cases the parity is $\lambda_v$, proving \textup{(8.5)}.
\end{proof}

\section{All vertex triangles can be glued}
\label{sec:triangle-compatibility}

\begin{theorem}[Triangle compatibility]
\label{thm:triangle-compatibility}
Let $G$ be a loopless cubic multigraph with a fixed nowhere-zero three-bit
flow $f$ and fixed local edge orderings. Define the offsets $g_{v,e}$ by
\textup{(5.4)} and the discrepancies $d_e$ by \textup{(6.1)}. Then there
exist vectors $t_v\in\bits$, one for each vertex, and bits
$\epsilon_e\in\{0,1\}$, one for each edge, such that
\[
t_u\xor t_v\xor\epsilon_e f(e)=d_e
\qquad\text{for every edge }e=uv.
\]
Equivalently, the triangle-gluing system has a solution, and the translated
endpoint triangles assign the same side set to every edge.
\end{theorem}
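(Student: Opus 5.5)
The plan is to argue by contradiction through the certificate criterion of Lemma~\ref{lem:certificate-shape}. Suppose the triangle-gluing system were inconsistent. Then there is a family of testers $r_e\in\bits$, one per edge, satisfying the two cancellation conditions \textup{(7.5)} and \textup{(7.6)} together with the parity condition \textup{(7.7)}. I will show that \textup{(7.5)} and \textup{(7.6)} alone already force
\[
\bigxor_{e\in E(G)} r_e\cdot d_e=0 .
\]
This contradicts \textup{(7.7)}, so no certificate exists and a solution $(t_v,\epsilon_e)$ exists. The final sentence of the theorem then follows from Proposition~\ref{prop:edge-gluing-equation}, applied edge by edge.

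The first step is to redistribute the global right-side parity over incidences. For $e=uv$, definition \textup{(6.1)} and linearity of the dot product give
\[
r_e\cdot d_e=(r_e\cdot g_{u,e})\xor(r_e\cdot g_{v,e}).
\]
Since $G$ is loopless, every edge has exactly two distinct endpoints. Regrouping the XOR over all edges as an XOR over all vertices therefore gives
\[
\bigxor_{e\in E(G)} r_e\cdot d_e=\bigxor_{v\in V(G)}\ \bigxor_{e\in\dd(v)} r_e\cdot g_{v,e}=\bigxor_{v\in V(G)}\lambda_v ,
\]
where the last equality is the local computation \textup{(8.4)}. At each vertex, conditions \textup{(7.5)} and \textup{(7.6)} specialize to \textup{(8.1)} and \textup{(8.2)}, so Lemma~\ref{lem:local-tester-parity} applies at every vertex. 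It identifies $\lambda_v$ with the parity of the number of nonzero testers on edges incident with $v$.

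The second step is a double count. Summing these parities over all vertices counts the pairs $(v,e)$ with $e\in\dd(v)$ and $r_e\neq000$. Each edge with a nonzero tester contributes exactly two such pairs, one at each of its two distinct endpoints. Hence the total is $2\cdot\#\{e:r_e\neq000\}$, which is even, and so $\bigxor_v\lambda_v=0$. Parallel edges cause no problem, because they are distinct edge objects each counted at their own two endpoints.

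The main obstacle is not a computation but bookkeeping. I must check that the regrouping of $r_e\cdot d_e$ into vertex contributions uses exactly the offsets $g_{v,e}$ of \textup{(5.4)}, with the local ordering $a_v,b_v,c_v$ fixed once for all. I must also check that the hypotheses of Lemma~\ref{lem:local-tester-parity} are precisely the restrictions of \textup{(7.5)}--\textup{(7.6)} to $\dd(v)$, so that no extra condition is smuggled in. Once this is verified, the contradiction with \textup{(7.7)} is immediate.
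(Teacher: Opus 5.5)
Your proposal is correct and follows the paper's own proof essentially step for step: you assume inconsistency, take a tester certificate from Lemma~\ref{lem:certificate-shape}, regroup $\bigxor_{e} r_e\cdot d_e$ over incidences into $\bigxor_v\lambda_v$ via \textup{(8.4)}, convert each $\lambda_v$ into the parity of nonzero testers at $v$ with Lemma~\ref{lem:local-tester-parity}, and use the even incidence count $2\,\#\{e: r_e\neq000\}$ to contradict \textup{(7.7)}. You then read off side agreement from Proposition~\ref{prop:edge-gluing-equation}, exactly as the paper does.
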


\begin{proof}
Suppose for contradiction that no such vectors and bits exist, so the
triangle-gluing system is inconsistent. By
Lemma~\ref{lem:certificate-shape}, choose a tester $r_e\in\bits$ for every
edge $e$ satisfying \textup{(7.5)}--\textup{(7.7)}. Put $S=\{e\in E(G):r_e\neq000\}$.
We evaluate the right-side parity in \textup{(7.7)}.

Let $\mathcal I(G)=\{(v,e):v\in V(G),\ e\in\dd(v)\}$
be the set of vertex-edge incidences. Because $G$ is loopless, every edge
$e$ has two distinct endpoints. For each edge, temporarily denote these
endpoints by $u_e$ and $v_e$ in an arbitrary order. Thus $e$ gives exactly
the two incidences $(u_e,e)$ and $(v_e,e)$. By the definition
$d_e=g_{u_e,e}\xor g_{v_e,e}$ and linearity of the dot product,
\begin{align*}
\bigxor_{e\in E(G)}\bigl(r_e\cdot d_e\bigr)
&=\bigxor_{e\in E(G)}
\left[
\bigl(r_e\cdot g_{u_e,e}\bigr)
\xor
\bigl(r_e\cdot g_{v_e,e}\bigr)
\right]\\
&=\bigxor_{(v,e)\in\mathcal I(G)}
\bigl(r_e\cdot g_{v,e}\bigr)\\
&=\bigxor_{v\in V(G)}
\bigxor_{e\in\dd(v)}\bigl(r_e\cdot g_{v,e}\bigr).
\tag{9.1}
\end{align*}
Interchanging $u_e$ and $v_e$ only interchanges the two terms inside the
brackets. The remaining equalities merely reindex and regroup a finite XOR,
so no orientation or ordering of the incidences is involved. Parallel edges
remain distinct because the edge object $e$ is part of every incidence.

The chosen tester family satisfies \textup{(7.5)} and \textup{(7.6)}, so the
calculation in Section~\ref{sec:local-testers} applies at every vertex, using
the fixed local ordering there. Equation \textup{(8.4)} identifies the inner
XOR in \textup{(9.1)} with $\lambda_v$, and
Lemma~\ref{lem:local-tester-parity} gives
\[
\begin{aligned}
\bigxor_{e\in E(G)}\bigl(r_e\cdot d_e\bigr)
&=\bigxor_{v\in V(G)}\lambda_v\\
&\equiv\sum_{v\in V(G)}\lambda_v\pmod2\\
&\equiv\sum_{v\in V(G)}|S\cap\dd(v)|\pmod2.
\end{aligned}
\tag{9.2}
\]
Here XOR is addition modulo two, and the last congruence sums \textup{(8.5)}
over all vertices. Each edge of $S$ is counted twice in \textup{(9.2)}, once
at each endpoint; parallel edges remain distinct.
Consequently,
\[
\sum_{v\in V(G)}|S\cap\dd(v)|=2|S|\equiv0\pmod2.
\]
The left side of \textup{(7.7)} is itself a bit, so congruence to zero modulo
two forces it to equal $0$. This contradicts the assertion in
\textup{(7.7)} that it equals $1$. No tester family satisfying
\textup{(7.5)}--\textup{(7.7)} exists. By the equivalence in
Lemma~\ref{lem:certificate-shape}, the gluing system is consistent and hence
has a solution. Proposition~\ref{prop:edge-gluing-equation} then gives the
equivalent side-set agreement asserted in the theorem.
\end{proof}

The proof uses neither connectedness nor bridgelessness once the nowhere-zero
three-bit flow has been fixed; it still uses cubicity in the local calculation
and looplessness in counting two distinct endpoint incidences per edge.
Lemma~\ref{lem:certificate-shape} says that every possible obstruction is a
tester certificate, and the incidence count above proves that the right side
of every such certificate is zero rather than one.

\section{From glued triangles to cycles}
\label{sec:cycles}

Continue with the loopless cubic multigraph $G$, the fixed nowhere-zero
three-bit flow and local edge orderings, and choose a solution
$(t_v,\epsilon_e)$ supplied by
Theorem~\ref{thm:triangle-compatibility}. For every incidence $(v,e)$, set
\[
Q_{v,e}
=Q_{v,e}(t_v)
=\{t_v\xor g_{v,e},\ t_v\xor g_{v,e}\xor f(e)\}.
\tag{10.1}
\]
By Lemma~\ref{lem:uniform-side-formula}, this is the triangle side assigned to
$e$ at $v$, and it has two distinct elements.
Proposition~\ref{prop:edge-gluing-equation} implies
\[
Q_{u,e}=Q_{v,e}
\qquad\text{for every edge }e=uv.
\]
Define the common two-element set by
\[
P_e=Q_{u,e}=Q_{v,e}.
\tag{10.2}
\]

\begin{lemma}[Labels trace cycles]
\label{lem:labels-trace-cycles}
For a fixed label $s\in\bits$, let $M_s=\{e\in E(G):s\in P_e\}$.
Let $G_s=(V(G),M_s)$ be the spanning subgraph of $G$ with this edge set.
Here each member of $M_s$ retains its identity as an edge object, so parallel
edges remain distinct. Every vertex has degree zero or two in $G_s$.
Consequently, every connected component of $G_s$ containing an edge is a
cycle.
\end{lemma}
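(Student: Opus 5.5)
The plan is to compute the degree of each vertex in $G_s$ directly from the triangle at that vertex, and then read off the component structure from the degree condition.

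\textbf{Degree at a vertex.} Fix $s$ and a vertex $v$, with local ordering $a,b,c$ and translation $t_v$. Because $G$ is loopless, each edge of $\dd(v)$ contributes exactly one to the degree of $v$ in $G_s$ when it lies in $M_s$, and zero otherwise. Hence
\[
\deg_{G_s}(v)=\#\{e\in\dd(v): s\in P_e\}.
\]
By (10.2), $P_e=Q_{v,e}$ for each $e\in\dd(v)$, computed at the endpoint $v$ itself. Lemma~\ref{lem:uniform-side-formula} shows that $Q_{v,a},Q_{v,b},Q_{v,c}$ are exactly the three side sets of the triangle (5.2) with $t=t_v$. Lemma~\ref{lem:triangle-parity} then says that $s$ lies in exactly zero or two of them. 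Therefore $\deg_{G_s}(v)\in\{0,2\}$.

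The one subtle point is the justification for evaluating $P_e$ at $v$ rather than at the other endpoint. This is precisely what the gluing from Theorem~\ref{thm:triangle-compatibility} provides through (10.2). Without it, the label sets seen from the two ends of an edge could differ, and the count above would not make sense.

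\textbf{Components.} Let $K$ be a connected component of $G_s$ that contains an edge. A vertex of degree zero in $G_s$ is isolated, so it forms its own component. Consequently every vertex of $K$ is incident with an edge of $K$, and so has degree two. Moreover, its degree in $K$ equals its degree in $G_s$, since a component contains all edges at its vertices. Thus $K$ is connected and $2$-regular, which is a cycle by definition. Parallel edges cause no trouble: two parallel members of $M_s$ at a vertex $v$ simply give a $2$-cycle, and loops are absent.

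I expect no real obstacle here. The content is entirely the combination of the gluing identity (10.2) with triangle parity.
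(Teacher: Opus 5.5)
Your proof is correct and is essentially the paper's argument. You use the gluing identity \textup{(10.2)} to replace each $P_e$ by the local side $Q_{v,e}$, and then combine triangle parity (Lemma~\ref{lem:triangle-parity}) with looplessness to get degree $0$ or $2$. You then observe that an edge-containing component has no degree-zero vertex and retains all incident $G_s$-edges, so it is connected and $2$-regular, hence a cycle.
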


\begin{proof}
Fix a vertex $v$. Its three incident edges are distinct edge objects, even
if some are parallel. For every $e\in\dd(v)$, equality \textup{(10.2)} gives
$P_e=Q_{v,e}$, so the three global pairs on the incident edges are exactly the
three translated triangle sides at $v$. By
Lemma~\ref{lem:triangle-parity}, the label $s$ belongs to exactly zero or two
of those side sets. By the definition of $M_s$, this is the number of edge
objects of $M_s$ incident with $v$. Since $G_s$ is loopless, that number is
precisely the degree of $v$ in $G_s$. Thus every vertex has degree zero or
two in $G_s$.

Let $K$ be a connected component of $G_s$ containing an edge. No vertex of
$K$ has degree zero in $G_s$, since such a vertex would be an isolated
component rather than a member of the edge-containing component $K$. Every
vertex of $K$ therefore has degree two in $G_s$. All $G_s$-edges incident
with a vertex of $K$ lie in the same connected component, so its degree in
$K$ is also two. Thus $K$ is a connected 2-regular subgraph, which is a
cycle by the definition in Section~1. This includes a two-edge cycle formed
by a pair of parallel edges.
\end{proof}

\pagebreak[3]
\begin{theorem}[Flow-to-CDC theorem]
\label{thm:flow-to-cdc}
Every loopless cubic multigraph that admits a nowhere-zero three-bit flow has
a cycle double cover. Moreover, the occurrences in the cover can be indexed
by pairs $(s,K)$, where $s\in\F_2^3$ and $K$ is an edge-containing component
of the label subgraph $G_s$.
\end{theorem}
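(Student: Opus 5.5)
The plan is to assemble the pieces already proved. Fix the loopless cubic multigraph $G$, a nowhere-zero three-bit flow $f$, and arbitrary local edge orderings $a_v,b_v,c_v$ at every vertex. I apply Theorem~\ref{thm:triangle-compatibility} to obtain translations $t_v$ and bits $\epsilon_e$ solving the triangle-gluing system. As in Section~\ref{sec:cycles}, Proposition~\ref{prop:edge-gluing-equation} then yields for every edge $e=uv$ a common two-element label set $P_e=Q_{u,e}=Q_{v,e}$, defined by \textup{(10.2)}. By Lemma~\ref{lem:uniform-side-formula}, $P_e$ has exactly two distinct elements, since its XOR difference is $f(e)\neq000$.

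Next I define the cover. For every $s\in\F_2^3$ form the label subgraph $G_s=(V(G),M_s)$. Let $\mathcal C$ be the multiset whose occurrences are indexed by the pairs $(s,K)$, where $K$ is an edge-containing component of $G_s$, with the occurrence indexed by $(s,K)$ equal to $K$. By Lemma~\ref{lem:labels-trace-cycles}, each such $K$ is a cycle, so $\mathcal C$ is a finite multiset of cycles. It is finite because there are eight labels and each $G_s$ has finitely many components. Distinct pairs $(s,K)$ and $(s',K')$ with $K=K'$ as subgraphs are kept as distinct occurrences; this is exactly what the indexing in the statement asserts.

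The verification of the double-covering property is a counting step. Fix an edge $e$. For a label $s$, the edge $e$ lies in $G_s$ exactly when $s\in P_e$. In that case it lies in exactly one component of $G_s$, because components partition the edge set, and that component is edge-containing. When $s\notin P_e$, no occurrence with first index $s$ contains $e$. Hence the number of occurrences containing $e$ equals $|P_e|=2$. This is the required double cover. The edgeless case gives the empty cover, consistent with the convention stated after the definition.

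I expect no step to be a genuine obstacle here, since the substantive work is carried by Theorem~\ref{thm:triangle-compatibility} and Lemma~\ref{lem:labels-trace-cycles}. The only point needing care is bookkeeping. Parallel edges must remain distinct edge objects, both in $M_s$ and in the component partition. The count must be taken over the index set of pairs $(s,K)$ rather than over distinct subgraphs. This ensures that a cycle arising for two different labels is correctly counted twice.
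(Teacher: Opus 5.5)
Your proposal is correct and matches the paper's proof essentially step for step. Theorem~\ref{thm:triangle-compatibility} supplies the common pairs $P_e$, Lemma~\ref{lem:labels-trace-cycles} makes every edge-containing component of each $G_s$ a cycle, and indexing occurrences by $(s,K)$ shows that each edge lies in exactly $|P_e|=2$ of them, counting over index pairs rather than distinct subgraphs, as the paper also does.
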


\begin{proof}
Fix a nowhere-zero three-bit flow. At every vertex, choose an arbitrary
ordering of its three incident edge objects and build the corresponding
vertex triangle. Theorem~\ref{thm:triangle-compatibility} supplies a solution
to the gluing system, producing the global two-element edge sets $P_e$
defined in \textup{(10.2)}.

For each of the eight labels $s$, include every edge-containing connected
component $K$ of $G_s$ in a multiset $\mathcal C$. More precisely, an
occurrence of a cycle in $\mathcal C$ is indexed by the pair $(s,K)$. Thus if
the same cycle subgraph occurs for two different labels, it contributes two
distinct occurrences to the multiset. By
Lemma~\ref{lem:labels-trace-cycles}, every included component is a cycle.
The multiset is finite because there are eight labels and each finite graph
$G_s$ has finitely many connected components.

Fix an edge $e$. Its two-element set $P_e$ contains two distinct labels, say
$s$ and $s'$. From the definition of $M_q$, for $q\in\bits$, the edge $e$
belongs to $M_q$ if and only if $q\in P_e$. Hence $e$ belongs to exactly the
two spanning subgraphs $G_s$ and $G_{s'}$. In each it belongs to exactly one
connected component. Those components contain $e$, so both are included in
$\mathcal C$, giving one occurrence indexed by $s$ and one indexed by $s'$.
These remain two occurrences even if the two component subgraphs are equal.
For every other label $q\notin\{s,s'\}$, the edge $e$ is not in $M_q$, so no
other member of $\mathcal C$ contains $e$. Therefore every edge occurs in
exactly two members of $\mathcal C$, counted with multiplicity, and
$\mathcal C$ is a cycle double cover.
\end{proof}

\begin{corollary}[Cubic conclusion of the argument]
\label{thm:cubic-conclusion}
Every loopless bridgeless cubic multigraph has a cycle double cover.
\end{corollary}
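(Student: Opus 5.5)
The corollary is a direct combination of two results already stated, so the plan is short. Let $G$ be a loopless bridgeless cubic multigraph. The only hypothesis that Theorem~\ref{thm:flow-to-cdc} adds beyond looplessness and cubicity is the existence of a nowhere-zero three-bit flow, and bridgelessness is exactly what supplies it.

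\textbf{Step 1: produce a flow.} Invoke Corollary~\ref{cor:three-bit-flow}: since $G$ is bridgeless, it has a nowhere-zero $\F_2^3$-flow $f$. That corollary is derived from Theorem~\ref{thm:seymour} and Theorem~\ref{thm:tutte} with $k=8$. Both theorems apply to arbitrary finite bridgeless graphs, so multigraphs and disconnected graphs need no special treatment. Since $G$ is loopless, the flow conditions reduce to the XOR-zero rule \textup{(4.2)} at each vertex.

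\textbf{Step 2: convert the flow into a cover.} Feed the loopless cubic multigraph $G$ and the flow $f$ into Theorem~\ref{thm:flow-to-cdc}. That theorem needs no connectedness or bridgelessness beyond the flow itself, as noted after Theorem~\ref{thm:triangle-compatibility}. It returns the multiset $\mathcal C$ of components $(s,K)$ of the label subgraphs $G_s$, which is a cycle double cover. If $G$ has no edges, the empty multiset already works.

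\textbf{Where the weight lies.} Neither step involves a new calculation, so there is no real obstacle inside the corollary itself. All the substance sits in Theorem~\ref{thm:triangle-compatibility}, specifically in the certificate analysis of Lemma~\ref{lem:certificate-shape} and Lemma~\ref{lem:local-tester-parity}. In this proof I would only check that the corollary's hypotheses (loopless, cubic, finite) match those of Theorem~\ref{thm:flow-to-cdc} verbatim, and they do.
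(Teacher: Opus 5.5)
Your proposal is correct and matches the paper's proof. The paper also obtains a nowhere-zero three-bit flow from Corollary~\ref{cor:three-bit-flow} (Seymour's theorem plus Tutte's theorem with $k=8$) and then applies Theorem~\ref{thm:flow-to-cdc}, whose only hypotheses, loopless and cubic, are exactly those of the corollary.
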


\begin{proof}
Corollary~\ref{cor:three-bit-flow} supplies a nowhere-zero three-bit flow, so
Theorem~\ref{thm:flow-to-cdc} applies.
\end{proof}

The indexing in Theorem~\ref{thm:flow-to-cdc} uses eight possible labels, not
necessarily eight cycles. For one label $s$, the subgraph $G_s$ may have
several edge-containing components, and each such component contributes a
separate cycle occurrence indexed by $(s,K)$.

\section{Concluding remarks}
\label{sec:conclusion}

Our proof has four stages. First,
Proposition~\ref{prop:cubic-reduction} reduces the problem for finite
bridgeless multigraphs to the loopless bridgeless cubic case. Its construction
works componentwise, removes and later restores loops, expands every
nonisolated vertex into a replacement cycle, and projects a cycle double cover
of the resulting cubic multigraph back to the original graph.

Second, Corollary~\ref{cor:three-bit-flow}, obtained from the theorems of
Seymour and Tutte, supplies a nowhere-zero $\F_2^3$-flow on the cubic graph.
Third, Sections~\ref{sec:vertex-triangle}--\ref{sec:triangle-compatibility}
convert that flow into locally labeled vertex triangles and prove that the
triangles can be translated so that their side-label sets agree across every
edge. This is the content of
Theorem~\ref{thm:triangle-compatibility}. Fourth,
Section~\ref{sec:cycles} extracts cycles from the compatible labels and proves
the flow-to-cover result, Theorem~\ref{thm:flow-to-cdc}. Applying the
three-bit flow input gives the cubic conclusion,
Corollary~\ref{thm:cubic-conclusion}.

The main compatibility step is a finite binary-equation argument.
Proposition~\ref{prop:edge-gluing-equation} converts side agreement into the
triangle-gluing system. Lemma~\ref{lem:certificate-shape} shows that an
inconsistent system would have edge testers satisfying
\textup{(7.5)}--\textup{(7.7)}. Lemma~\ref{lem:local-tester-parity}
identifies each vertex contribution with the parity of its incident nonzero
testers. Theorem~\ref{thm:triangle-compatibility} then counts every nonzero
edge tester once at each endpoint, forcing the certificate's right-side
parity to be zero rather than one.

Once compatibility is established, each of the eight three-bit labels
selects a subgraph of degree zero or two at every vertex. Its edge-containing
components are cycles, and the two distinct labels on each edge place that
edge in exactly two indexed cycle components.

\begin{theorem}[Cycle Double Cover Theorem]
\label{thm:cdc}
Every finite bridgeless multigraph has a cycle double cover.
\end{theorem}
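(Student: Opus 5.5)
The proof assembles two results already established, so essentially no new argument is needed. Proposition~\ref{prop:cubic-reduction} shows that the statement of Theorem~\ref{thm:cdc} follows once every loopless bridgeless cubic multigraph has a cycle double cover. Corollary~\ref{thm:cubic-conclusion} supplies exactly that hypothesis. So the plan is to invoke the reduction with the corollary as its input and conclude.

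In order, I would carry out the following steps.

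\begin{enumerate}
\item Let $G$ be an arbitrary finite bridgeless multigraph. Apply the four-step construction in the proof of Proposition~\ref{prop:cubic-reduction}: delete the loops, expand each nonisolated vertex into a replacement cycle, and obtain a loopless bridgeless cubic multigraph $H$. The edgeless case is handled there directly by the empty cover.
\item Apply Corollary~\ref{thm:cubic-conclusion} to $H$, obtaining a cycle double cover $\mathcal D$ of $H$. The corollary does not require connectedness, and neither does Theorem~\ref{thm:flow-to-cdc}, since the label argument works componentwise.
\item Project $\mathcal D$ back to $G_0$. Split the resulting closed trails into cycles via Lemma~\ref{lem:trail-cycles}, and restore each loop twice, as in Step~4 of the proof of Proposition~\ref{prop:cubic-reduction}. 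This produces a cycle double cover of $G$.
\end{enumerate}

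The only thing to check at the level of this theorem is that the hypotheses match. Corollary~\ref{thm:cubic-conclusion} is stated for exactly the class of graphs (loopless, bridgeless, cubic, possibly with parallel edges, possibly disconnected) that the reduction produces and requires. Its flow input, Corollary~\ref{cor:three-bit-flow}, applies to $H$ because $H$ is bridgeless, by Step~3 of the reduction.

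All of the real weight therefore sits in Theorem~\ref{thm:triangle-compatibility}, which is the step I would regard as the main obstacle and would re-verify before relying on it. Since the conclusion resolves a long-standing open conjecture, I would not treat the assembly step as trustworthy until the local identity is confirmed independently. That identity is Lemma~\ref{lem:local-tester-parity}, which equates $r_b\cdot x$ with the parity of the number of nonzero testers at a vertex. I would also check the certificate description in Lemma~\ref{lem:certificate-shape} against the matrix criterion \textup{(7.3)}.

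The first test I would run is to solve the gluing system explicitly on small non-3-edge-colourable cubic graphs. The Petersen graph, with a concrete nowhere-zero $\F_2^3$-flow, is the natural first case. I would then confirm that the resulting label subgraphs $G_s$ really are $0$/$2$-regular and cover every edge twice.

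Granting that theorem, Theorem~\ref{thm:cdc} follows immediately from the two cited results.
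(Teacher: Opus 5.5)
Your proposal is correct and matches the paper's proof: Corollary~\ref{thm:cubic-conclusion} supplies the hypothesis of Proposition~\ref{prop:cubic-reduction}, and that proposition's construction (loop removal, vertex expansion, projection to closed trails, restoring each loop twice) gives the cover, with no connectedness assumption needed anywhere. The local identity you flag for re-verification does hold. Under (8.1)--(8.2), $\lambda_v=1$ exactly when all three testers at $v$ are nonzero: if $\lambda_v=0$, all three lie in the one-dimensional space orthogonal to $x$ and $y$ and sum to zero, so an even number of them are nonzero. The number of vertices where all three are nonzero is even by the handshake lemma, because the tester support has degrees in $\{0,2,3\}$.
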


\begin{proof}
Corollary~\ref{thm:cubic-conclusion} establishes the hypothesis of
Proposition~\ref{prop:cubic-reduction}: every loopless bridgeless cubic
multigraph has a cycle double cover. Applying that proposition gives a cycle
double cover of every finite bridgeless multigraph.

For clarity, all exceptional cases are already included in the proposition.
Its proof works separately in disconnected components, ignores isolated
vertices, and handles a graph with no non-loop edges directly. In the
remaining case it constructs a loopless bridgeless cubic multigraph, applies
Corollary~\ref{thm:cubic-conclusion}, and discards cover cycles that use no old
edge. It projects every remaining cover cycle to a closed trail, decomposes
that trail into cycles, and finally restores every original loop by inserting
two copies of that one-edge cycle. Thus no connectedness, looplessness, or
minimum-degree assumption is being added to the theorem stated here.
\end{proof}

{\bf Acknowledgements}: 
A high level four-stage strategy was written by the author and provided to Codex and asked it to apply the strategy to open problems related to the Cycle Double Cover Conjecture (CDCC). Codex listed eight related open problems. The author used multiple rounds of prompting with some graph theoretic ideas and nudging Codex in the right direction by making it work out the steps on several examples of graphs, fill the proof gaps and fix the errors. The author then asked Codex to analyze the previously generated proofs and find missing steps. In later rounds of prompting, the author provided more graph theoretic details and iteratively asked Codex to verify all the steps section by section.

During this same week, OpenAI also announced a proof of CDCC \cite{OpenAI-CDC}. There are differences between our proof and OpenAI's proof. OpenAI's proof uses an abstract map and its annihilator in the dual space. We use scalar XOR equations, Gaussian elimination certificates, dot products and tester vectors. OpenAI's proof uses nowhere-zero 8-flow results of Kilpatrick and Jaeger and Tutte's group-flow equivalence. Our proof uses Seymour 6-flow as a starting point. Both these flows finally produce the same objects. Both papers share the high-level {\em flow to label} arguments. OpenAI's proof is linear-algebraic while our proof is graph-theoretic and combinatorial in nature.

We believe these techniques (triangles, edge gluing, inconsistency certificates, cycle extraction) might be useful to prove other related conjectures including Strong Cycle Double Cover Conjecture.

\end{document}